\numberwithin{equation}{section}
\def\l{{\lambda}}
\def\R{\mathbb{R}}
\def\N{\mathbb{N}}
\def\Z{\mathbb{Z}}
\def\e{{\varepsilon}}
\def\p{\partial}
\def\ints{\int_{\R^n}}
\def\F{\mathcal{F}}
\def\til{~}
\newcommand{\n}[1]{\left\lVert#1\right\rVert}
\DeclareMathOperator*{\supp}{supp}
\DeclareMathOperator*{\sgn}{sgn}
\DeclareMathOperator*{\meas}{meas}
\newtheorem{thm}{Theorem}
\newtheorem{cor}{Corollary}
\newtheorem{lem}{Lemma}
\newtheorem{rem}{Remark}
\newtheorem{Def}{Definition}
\title{Blow-up and lifespan estimate for generalized Tricomi equations related to Glassey conjecture}
\author{
Ning-An Lai
	\footnote{
	Institute of Nonlinear Analysis and Department of Mathematics, Lishui University,
	No. 1 Xueyuan Rd., Lishui City 323000, Zhejiang, China.
	E-mail: ninganlai@lsu.edu.cn}
\quad
Nico Michele Schiavone
	\footnote{
	 Department of Mathematics,
	 Sapienza University of Rome,
	 P.le Aldo Moro 5, Roma 00185, Italy.
	E-mail: schiavone@mat.uniroma1.it
	}
}
\date{
\[
\begin{array}{ll}
\mbox{\footnotesize{\bf Keywords:}}
& \mbox{\footnotesize generalized Tricomi equations, semilinear, derivative nonlinearity, blow-up, lifespan}\\
\mbox{\footnotesize{\bf MSC2020:}}
& \mbox{\footnotesize primary 35L71, secondary 35B44 
}\\
\end{array}
\]
}
\begin{document}
\maketitle
\begin{abstract}

We study in this paper the small data Cauchy problem for the semilinear generalized Tricomi equations with a nonlinear term of derivative type
\[
u_{tt}-t^{2m}\Delta u=|u_t|^p
\]
for $m\ge0$. Blow-up result and lifespan estimate from above are established for $1<p\le 1+\frac{2}{(m+1)(n-1)-m}$. If $m=0$, our results coincide with those of the semilinear wave equation. The novelty consists in the construction of a new test function, by combining cut-off functions, the modified Bessel function and a harmonic function. Interestingly, if $n=2$ the blow-up power is independent of $m$. We also furnish a local existence result, which implies the optimality of lifespan estimate at least in the $1$-dimensional case.

\end{abstract}


\section{Introduction}
\par\quad
In this work, we consider the small data Cauchy problem for the semilinear generalized Tricomi equations with a power-nonlinearity of derivative type
\begin{equation}
\label{Tricomi}
\left\{
\begin{aligned}
& u_{tt} - t^{2m}\Delta u = |u_t|^p \quad\text{in $[0, T)\times\R^n$}, \\
& u(x,0)=\e f(x), \quad u_t(x,0)=\e g(x) \quad x\in\R^n,
\end{aligned}
\right.
\end{equation}
where $m\ge0$ is a real constant, $n\ge1$ is the dimension  and $\e>0$ is a \lq\lq small'' parameter. The initial data $f,g$ are compactly supported functions from the energy spaces
\begin{equation*}
	f\in H^1(\R^n),
	\quad
	g\in H^{1-\frac{1}{m+1}}(\R^n), 
\end{equation*}
and, without loss of generality, we may assume
\begin{equation}\label{supp}
\begin{aligned}
\supp ( f, g ) \subset \{x \in\R^n:|x|\le 1\}.
\end{aligned}
\end{equation}

Recently, the semilinear generalized Tricomi equations attract more and more attention, due to their physical background. In dimension $n=1$ and for $m=1/2$, the equation
\[
u_{tt}-tu_{xx}=0
\]
is the classical linear Tricomi equation, which was introduced by Tricomi \cite{Tri23} in the context of boundary value problems for partial differential equations of mixed type.
The Tricomi equation appears in gas dynamic problems, connected to gas flows with nearly sonic speed, describing the transition from subsonic flow ($t<0$) to supersonic flow ($t>0$), see Frankl \cite{Fra}.
We refer to Yagdjian \cite{Yag1,Yag2,Yag3,Yag4,Yag5} and references therein for more details.

For $k>0$ and $n\ge 1$, the operator
\begin{equation}\label{Gellerstedt}
\mathcal{T} = \partial_{t}^2-t^{k}\Delta
\end{equation}
appearing in \eqref{Tricomi} is also known as Gellerstedt operator.
 The first direction in the study of (generalized) Tricomi equations is to construct the explicit fundamental solution. Barros-Neto and Gelfand \cite{BG1, BG2, BG3} first established the fundamental solution for
\[
yu_{xx}+u_{yy}=0
\]
in the whole plane. For the Gellerstedt operator
\eqref{Gellerstedt} with $k\in\N$, Yagdjian \cite{Yag1} constructed a fundamental solution relative
to an  arbitrary point $(t_0,x_0) \in [0,+\infty)\times\R^{n}$ with support located in the \lq\lq forward cone'' $D(t_0,x_0)=\{(t,x)\in\R^{n+1} \colon (k+2)|x-x_0|\le 2(t-t_0)^{k/2+1}\}$.

Recently, the long time behavior of solutions for small data Cauchy problem to the semilinear generalized Tricomi equation
\begin{equation}
\label{Trico1}
\left\{
\begin{aligned}
& u_{tt} - t^k \Delta u = |u|^p, \quad\text{in $[0, T)\times\R^n$} \\
& u(x,0)=\e f(x), \quad u_t(x,0)=\e g(x) \quad x\in\R^n,
\end{aligned}
\right.
\end{equation}
%
%
comes into interest.
The main goal is to determine the \emph{critical power} $p_c(k, n)$ such that, if $1<p\le p_c(k, n)$ then the solution blows up in a finite time, while if $p>p_c(k, n)$ there is a unique global-in-time solution.
Yagdjian \cite{Yag2} obtained some partial results, in the sense that there was still a gap between the blow-up and global existence ranges. The critical power is finally established in a recent series of works by He, Witt and Yin \cite{He1, He2, He3, He4} (see also the Doctoral dissertation by He \cite{He0}). For $k\ge 1$,  $p_c(k, n)$ admits the following form:
\begin{itemize}
		\item if $n = 1$, then $\displaystyle p_c(k, 1)=1+\frac 4k$;
		\item if $n\ge 2$, then $p_c(k, n)$ is the positive root of the quadratic equation
		\begin{equation*}
			2 + \left[ n+1 -3 \left(1-\frac{2}{k+2} \right)\right] p - \left[ n-1 + \left(1-\frac{2}{k+2} \right)\right] p^2 =0.
			%
		\end{equation*}
\end{itemize}
Recently, Lin and Tu \cite{LinTu} studied the upper bound of lifespan estimate for \eqref{Trico1}, and Ikeda, Lin and Tu \cite{ILT} established the blow-up and upper bound of lifespan estimate for the weakly coupled system of the generalized Tricomi equations with multiple propagation speed. The critical power above should be compared with the corresponding one for the semilinear wave equation $u_{tt}-\Delta u = |u|^p$. Indeed, letting $k=0$ in the definition of $p_c(k,n)$, we infer $p_c(0,1)=+\infty$ and, for $n\ge2$, $p_c(0, n)$ becomes the
so-called Strauss exponent,
which is exactly the critical power for the small data Cauchy problem in \eqref{Trico1} with $k=0$ (see e.g. \cite{LST20} and references therein for background and results about this problem).
Finally, we refer also to Ruan, Witt and Yin \cite{RWY1, RWY2, RWY3, RWY4} for results about the local existence and local singularity structure of low regularity solutions for the equation $u_{tt} -t^k \Delta u = f(t,x,u)$.

In this paper, we consider the semilinear generalized Tricomi equations with power-nonlinearity of derivative type, focusing on blow-up result and lifespan estimate from above for the small data Cauchy problem.
Note that, setting $m=0$ in \eqref{Tricomi}, we come back to the semilinear wave equation
\begin{equation}
\label{wave}
u_{tt} - \Delta u = |u_t|^p.
%
\end{equation}
For this problem, Glassey \cite{Gla83} conjectured that the critical exponent is the power, now named after him, defined by
\begin{equation}\label{pG}
	p_G(n) :=
	\left\{
	\begin{aligned}
	&1+\frac{2}{n-1} &\text{if $n\ge2$,}
	\\
	&+\infty &\text{if $n=1$.}
	\end{aligned}
	\right.
\end{equation}
The research on this problem was initiated by John \cite{John81}, where more general equations in dimension $n=3$ are considered, proving the blow-up of solutions for $p=2$.
Then, the study of the blow-up was continued in the low dimensional case by Masuda \cite{Mas83}, Schaeffer \cite{Sch86}, John \cite{John85} and Agemi \cite{Age91}, whereas Rammaha \cite{Ram87} treated the high dimensional case $n\ge4$ under radial symmetric assumptions. Finally, Zhou \cite{Zhou01} proved the blow-up for $n\ge1$ and $1<p\le p_G(n)$, furnishing the upper bound for the lifespan of the solutions, namely
\begin{equation}\label{TG}
	T_\e \le
	\left\{
	\begin{aligned}
	& C \e^{-\frac{2(p-1)}{2-(n-1)(p-1)}} &&\text{if $1<p<p_G(n)$,}
	\\
	& \exp(C \e^{-(p-1)})
	&&\text{if $p=p_G(n)$,}
	\end{aligned}
	\right.
\end{equation}
for some positive constant $C$ independent of $\e$. We recall that the \emph{lifespan} $T_\e$ is defined as the maximal existence time of
the solution, depending on the parameter $\e$.
Regarding the global existence part, we refer to Sideris \cite{Sid83}, Hidano and Tsutaya \cite{HT95} and Tzvetkov \cite{Tzv98} for results in dimension $n=2,3$ and Hidano, Wang and Yokoyama \cite{HWY12} for the high dimensional cases $n\ge4$ under radially symmetric assumptions. For more details about the Glassey conjecture, one can read the references \cite{LT2} and \cite{Wang}.

The study of problem \eqref{Tricomi} under consideration generalize the Glassey conjecture. Therefore, it is interesting to find the critical exponent and lifespan estimate for \eqref{Tricomi}, which will coincide with the Glassey exponent \eqref{pG} and Zhou's lifespan estimate \eqref{TG} respectively for $m=0$.
The main tool here used is the test function method. In \cite{He2}, the blow-up result for \eqref{Trico1} is based on a test function given by the product of the harmonic function $\int_{S^{n-1}}e^{x\cdot\omega}d\omega$ and the solution of the ordinary differential equation
\[
\lambda''(t)-t^k\lambda'(t)=0.
\]
Inspired by the works \cite{ISWa} and \cite{LT1}, we construct a nonnegative test function composed by a cut-off function, the harmonic function $\int_{S^{n-1}}e^{x\cdot\omega}d\omega$ and the solution of the ODE \eqref{ODE} below. Since we consider the Tricomi-type equations with derivative nonlinear term, the first derivative with respect to time variable and a factor $t^{-2m}$ are included in the special test function.
Before proving the blow-up result, we give also a local existence result following the approach of Yagdjian \cite{Yag2}, from which we can deduce the optimality of the lifespan estimates at least for $n=1$.

When this work was almost finished, we found the paper \cite{LP} by Lucente and Palmieri, where they independently studied the same problem with a different approach. However, the result we are going to present here improves the blow-up range and lifespan estimates they found.

We cite also the very recent papers \cite{CLP} by Chen, Lucente and Palmieri and \cite{HH} by Hamouda and Hamza, where the blow-up phenomena for generalized Tricomi equations with combined linearity, i.e. $u_{tt}-t^{2m}\Delta u = |u_t|^p+|u|^q$, is independently studied exploiting the iteration argument. In particular, the work \cite{HH} confirms the blow-up result presented in this paper by giving an alternative proof. Conversely, we are confident that also our method can be adapted to study various blow-up problems involving generalized Tricomi equations, including the combined nonlinearity. This means that the test function method presented in this work and the iteration argument developed in \cite{CLP, HH} furnish two different approaches for the study of blow-up phenomena for Tricomi-related problems.

\section{Main Result}

Let us start stating the definition of energy solution for our problem \eqref{Tricomi}, similarly as in \cite{ISWa} and \cite{LT}.

\begin{Def}\label{def1}
We say that the function
\begin{equation*}
u\in C([0,T),H^1(\R^n))\cap C^1([0,T),H^{1-\frac{1}{m+1}}(\R^n)),
\quad
\text{with $u_t \in L_{\rm loc}^p((0,T) \times \R^n)$},
\end{equation*}
is a weak solution of \eqref{Tricomi} on $[0,T)$ if $u(0, x)=\e f(x)$ in $H^1(\R^n)$, $u_t(0, x)=\e g(x)$ in $H^{1-\frac{1}{m+1}}(\R^n)$ and
\begin{equation}\label{weaksol}
\begin{split}
& \e \int_{\R^n}g(x)\Psi(0, x)dx
+\int_0^T\int_{\R^n}|u_t|^p\Psi(t, x) \, dxdt \\
=& \int_0^T\int_{\R^n} - u_t(t, x)\Psi_t(t, x) \, dxdt
+ \int_0^T\int_{\R^n}
t^{2m}\nabla u(t, x)\cdot\nabla\Psi(t, x) \, dxdt ,
\end{split}
\end{equation}
for any $\Psi(t, x)\in C^1_0\left([0, T)\times \R^n\right) \cap C^{\infty}\left((0, T)\times \R^n\right)$.
\end{Def}

\begin{rem}
	The choice of the functional spaces $H^1(\R^n)$ and $H^{1-\frac{1}{m+1}}(\R^n)$ for the initial data $u(0, x)$ and $u_t(0, x)$ respectively are suggested by \cite{He0} and by Theorem\til\ref{thm:locex} below. Of course, if $m=0$ we have $H^0(\R^n) = L^2(\R^n)$.
\end{rem}


In the same spirit of \cite{LST20}, let us define the exponent
\begin{equation*}
	p_T(n,m):=
	\left\{
	\begin{aligned}
	& 1+\frac{2}{(m+1)(n-1)-m} &&\text{if $n\ge2$,} \\
	& +\infty &&\text{if $n=1$,}
	\end{aligned}
	\right.
\end{equation*}
as the root (when $n\ge2$) 
of the expression $\gamma_T(n,m;p)=0$, where
\begin{equation*}
	\gamma_T(n,m;p) := 2 - [(m+1)(n-1)-m](p-1),
\end{equation*}
and observe that $\gamma_T(n,m;p) > 0$ for $1<p<p_T(n,m)$.

We state now our main result for \eqref{Tricomi}.

\begin{thm}
\label{thmTricomi}
Let $n\ge 1$, $m\ge0$ and
$
1<p\le p_T(n,m).
$
Assume that $f\in H^1(\R^n)$, $g\in H^{1-\frac{1}{m+1}}(\R^n)$ satisfy the compact support assumption \eqref{supp} and that
\begin{equation}\label{conddata}
	 a(m) f + g,
	 \quad
	 a(m) := [2(m+1)]^{\frac{m}{m+1}} 
	 {\Gamma\left(
	 	\frac{1}{2}+\frac{m}{2(m+1)}
	 \right)} {\Gamma\left(
	 	\frac{1}{2}-\frac{m}{2(m+1)}
	 \right)}^{-1},
\end{equation}
is non-negative and not identically vanishing.
Suppose that $u$ is an energy solution of \eqref{Tricomi} with compact support in the \lq\lq cone''
\begin{equation}\label{suppcond}
	\supp u
	\in
	\left\{(t,x) \in [0,T) \times \R^n \colon |x| \le \gamma(t) := 1+\frac{t^{m+1}}{m+1}\right\}.
\end{equation}

Then, there exists a constant $\e_0=\e_0(f, g, m, n, p)>0$
such that the lifespan $T_\e$ has to satisfy
\begin{equation}
\label{lifespan2}
T_\e \le
 \left\{
 \begin{aligned}
 & C \e^{-\frac{2(p-1)}{\gamma_T(n,m;p)}}
 && \text{if $1<p<p_T(n,m)$,}
 \\
 &\exp\left(C\e^{-(p-1)}\right)
 && \text{if $p=p_T(n,m)$,}
 \end{aligned}
 \right.
\end{equation}
for $0<\e\le\e_0$ and some constant $C$ independent of $\e$.
\end{thm}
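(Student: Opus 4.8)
The plan is to prove blow-up by the test function method, building the special test function $\Phi(t,x)=\lambda(t)\phi(x)$ announced in the introduction. Here $\phi(x)=\int_{S^{n-1}}e^{x\cdot\omega}\,d\omega$ is the positive function with $\Delta\phi=\phi$ and the classical asymptotics $\phi(x)\sim c_n|x|^{-(n-1)/2}e^{|x|}$ as $|x|\to\infty$, while $\lambda$ is the decaying solution of the ODE $\lambda''(t)=t^{2m}\lambda(t)$, expressible through a modified Bessel function of order $\nu=\tfrac{1}{2(m+1)}$, namely $\lambda(t)=\sqrt{t}\,K_{\nu}\!\left(\tfrac{t^{m+1}}{m+1}\right)$ up to normalization, so that $\lambda(t)\sim c\,e^{-t^{m+1}/(m+1)}$ as $t\to\infty$. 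First I would check that the constant in \eqref{conddata} is exactly $a(m)=-\lambda'(0)/\lambda(0)$: evaluating $\lambda$ and $\lambda'$ at $t=0$ produces the displayed ratio of Gamma factors. Since $\lambda$ is positive, decreasing and convex, $\lambda'(0)<0$ and $a(m)>0$.

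The core identity comes from pairing the equation with $\Phi$. Setting
\[
F(t):=\int_{\R^n}\big(u_t(t,x)\Phi(t,x)-u(t,x)\Phi_t(t,x)\big)\,dx,
\]
and using the weak formulation \eqref{weaksol} together with two spatial integrations by parts (no boundary terms thanks to the cone support \eqref{suppcond}), the contributions of $t^{2m}\Delta u$ and of $\Phi_{tt}=\lambda''\phi=t^{2m}\lambda\phi=t^{2m}\Delta\Phi$ cancel exactly, leaving $F'(t)=\int_{\R^n}|u_t|^p\Phi\,dx\ge0$. Hence $F$ is nondecreasing and, by \eqref{conddata}, $F(0)=\e\lambda(0)\int_{\R^n}(a(m)f+g)\phi\,dx=:C_0\e>0$. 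Writing $y(t):=\int_0^t\!\int_{\R^n}|u_s|^p\Phi\,dx\,ds$ we obtain $F(t)=C_0\e+y(t)\ge C_0\e$.

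Next I would establish the weight estimate
\[
W(t):=\int_{|x|\le\gamma(t)}\Phi(t,x)\,dx\le C\,(1+t)^{\frac{(m+1)(n-1)-m}{2}},
\]
which is the heart of the linear analysis: on the cone $|x|\le\gamma(t)=1+\tfrac{t^{m+1}}{m+1}$ the exponential growth $e^{|x|}$ of $\phi$ is compensated precisely by the exponential decay of $\lambda$, so that only a polynomial factor survives. This is where the modified Bessel asymptotics and the cone geometry enter, and where the exponent $(m+1)(n-1)-m$ — hence $p_T(n,m)$ and the independence of $m$ when $n=2$ (since then $n-1=1$) — is produced. To close the argument I would apply Hölder's inequality in the form $\int u_t\Phi\le (y'(t))^{1/p}W(t)^{1/p'}$ and upgrade it to the superlinear differential inequality $y'(t)\ge c\,F(t)^p\,W(t)^{-(p-1)}\ge c\,(C_0\e+y(t))^p\,W(t)^{-(p-1)}$. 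Integrating this Riccati-type inequality against $W(t)\sim(1+t)^{A}$ with $A=\tfrac{(m+1)(n-1)-m}{2}$, the solution becomes infinite at a finite $T_\e$ fixed by $(C_0\e)^{-(p-1)}\sim\int_0^{T_\e}W^{-(p-1)}\,ds$: for $1<p<p_T$ one has $A(p-1)<1$, the integral behaves like $T_\e^{\,\gamma_T/2}$, and $T_\e\le C\e^{-2(p-1)/\gamma_T}$; at $p=p_T$ one has $A(p-1)=1$, the logarithmic divergence gives $T_\e\le\exp(C\e^{-(p-1)})$, matching \eqref{lifespan2}.

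The main obstacle is the derivative nature of the nonlinearity, which forces $F$ to contain the coupling term $\int u\,\Phi_t$ involving $u$ rather than $u_t$; this term cannot be bounded instantaneously by the nonlinear quantity $\int|u_t|^p\Phi$. To obtain a genuinely superlinear feedback, and not merely exponential growth of $F$, I expect to need a lower bound showing that $\int u_t\Phi$ controls $F$ from below for large $t$ — equivalently, that the coupling term is dominated by $\int u_t\Phi$. This should follow by analyzing the auxiliary ODE $h''(t)-t^{2m}h(t)=\int_{\R^n}|u_t|^p\phi\,dx$ for $h(t)=\int_{\R^n}u\,\phi\,dx$, together with the positivity $F(0)>0$, which makes $h/\lambda$ strictly increasing and lets the growing Bessel mode dominate; the factor $t^{-2m}$ and the derivative $\lambda'$ built into the special test function are precisely the device to effect this control. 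A Kato-type ODE lemma, or an equivalent slicing/iteration, then turns the resulting inequality into finite-time blow-up with the sharp lifespan.
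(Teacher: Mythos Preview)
Your plan contains a real gap precisely at the point you flag as ``the main obstacle''. With $\Phi=\lambda\phi$ solving $\Phi_{tt}=t^{2m}\Delta\Phi$, the functional $F=\int(u_t\Phi-u\Phi_t)\,dx$ satisfies $F'=\int|u_t|^p\Phi\ge0$, but H\"older only gives
\[
y'(t)\;\ge\;\frac{\bigl(\int u_t\Phi\bigr)^p}{W(t)^{p-1}}
\;=\;\frac{(\lambda G)^p}{W(t)^{p-1}},\qquad G:=\int u_t\,\phi\,dx,
\]
and $\lambda G=F-|\lambda'|H$ with $H=\int u\phi$. Once $H>0$ (which your growing-mode argument does yield eventually), this says $\lambda G<F$, the \emph{wrong} direction: you cannot conclude $y'\gtrsim F^p W^{1-p}$. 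Trying instead to run Kato's lemma on $G$ alone, using $G'\ge N:=\int|u_t|^p\phi$ after $H>0$, fails because the relevant weight is then $\int_{|x|\le\gamma(t)}\phi\,dx\sim e^{\gamma(t)}\gamma(t)^{(n-1)/2}$, which grows exponentially; no finite-time blow-up follows. So the closure you sketch (``growing Bessel mode dominates'') does not produce a superlinear inequality with a polynomial weight.

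The paper sidesteps this entirely by a different construction. Instead of $\lambda''=t^{2m}\lambda$, it takes $\lambda$ solving
\[
\lambda''(t)-2m\,t^{-1}\lambda'(t)-t^{2m}\lambda(t)=0,
\]
and uses the test function
\[
\Phi(t,x)=-t^{-2m}\,\partial_t\bigl(\eta_M^{2p'}(t)\,\lambda(t)\bigr)\,\phi(x)\,\eta^0(t,x),
\]
with a cut-off $\eta_M$ at scale $M\in(1,T)$. The factor $t^{-2m}$ cancels the $t^{2m}$ in front of $\nabla u\cdot\nabla\Phi$, leaving $\partial_t(\eta_M^{2p'}\lambda)\int u\phi$, which after one integration by parts in $t$ becomes a pure $u_t$-term plus a boundary contribution $\e\,c_0(\mu)\int f\phi$. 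The choice of ODE then makes the bulk term (where $\eta_M\equiv1$) vanish identically, so the right-hand side is supported only on $t\in[M/2,M]$ and involves \emph{only} $u_t$. This yields directly
\[
C_1\e+\int|u_t|^p t^{-2m}\eta_M^{2p'}|\lambda'|\phi
\;\le\; C\,M^{-1-\frac{m}{2}+\frac{m}{2p}+\left[\frac{(m+1)(n-1)}{2}+1\right]\frac{p-1}{p}}
\left(\int|u_t|^p t^{-2m}\theta_M^{2p'}|\lambda'|\phi\right)^{1/p},
\]
and the Ikeda--Sobajima--Wakasa slicing functional $Y(M)=\int_1^M(\cdots)\sigma^{-1}d\sigma$ turns this into $M^{[(m+1)(n-1)-m](p-1)/2}Y'(M)\ge[C_1\e+(\ln2)^{-1}Y(M)]^p$, from which \eqref{lifespan2} follows. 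In short: the time-derivative and the $t^{-2m}$ factor built into $\Phi$, together with the modified ODE, are exactly the device that eliminates the $u$-coupling you were unable to control.
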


\begin{rem}
	For $m=0$ the exponent $p_T$ becomes the Glassey exponent \eqref{pG}, namely $p_T(n,0)=p_G(n)$, and the lifespan estimate \eqref{lifespan2} is exactly the same as \eqref{TG}.
\end{rem}

\begin{rem}
	It is interesting to see that, if $n=2$, then the blow-up power $p_T(2,m)=3$ and the subcritical lifespan estimate $T_\e \le C \e^{-\left( \frac{1}{p-1} - \frac{1}{2} \right)^{-1}}$ are independent of $m$, whereas the lifespan for the \lq\lq critical" power $p=p_T(n,m)$ is independent of $m$ for every dimension.
\end{rem}

\begin{rem}
We conjecture that $p_T(n,m)$ is indeed the critical exponent for problem \eqref{Tricomi} and the lifespan \eqref{lifespan2} are optimal. The next goal is to verify this conjecture considering the global-in-time existence for solutions to \eqref{Tricomi}.
\end{rem}


\section{Local existence result}\label{sec:locex}

Before to proceed with the demonstration of Theorem\til\ref{thmTricomi} in Section\til\ref{sec:blowup}, we firstly want to present in this section a local existence result.
As observed in \cite{LP}, it is possible to prove a local-in-time existence result for problem \eqref{Tricomi}, regardless the size of the Cauchy data, following the steps in Section 2.1 of \cite{DD01}. However, we believe that the following Theorem\til\ref{thm:locex} is interesting to justify the choice of the energy space for the solution and the initial data in Theorem\til\ref{thmTricomi}. In addition, we can verify the optimality of the lifespan estimate in the $1$-dimensional case.


Let us consider the integral equation
\begin{equation}\label{intTricomi}
\begin{split}
u(t,x) =&\, \e V_1(t,D_x) f(x) + \e V_2(t,D_x) g(x)
\\
&+ \int_0^t [V_2(t,D_x)V_1(s,D_x)-V_1(t,D_x)V_2(s,D_x)] |u_t(s,x)|^p ds
\end{split}
\end{equation}
where $\e>0$ is not necessarily small, $f \in H^1(\R^n)$, $g \in H^{1-\frac{1}{m+1}}(\R^n)$ and the Fourier multiplier $V_1(t,D_x)$ and $V_2(t,D_x)$ are defined below. As remarked in \cite{Yag2}, any classical or distributional solution to our problem \eqref{Tricomi} solves also the integral equation \eqref{intTricomi}. We have the following result.

\begin{thm}\label{thm:locex}
	Let $0\le m<2$, $p> \max\left\{2, 1+\frac{n}{2}\right\}$,
	$\sigma \in \left(\frac{n}{2} - \frac{m}{2(m+1)}, p-1-\frac{m}{2(m+1)} \right)$ and $f \in H^{\sigma+1}(\R^n)$, $g \in H^{\sigma+1-\frac{1}{m+1}}(\R^n)$. Then there exists a unique solution
	\begin{equation*}
	u \in
	C \left([0,T); \dot{H}^{\sigma+\frac{m}{2(m+1)}+1}(\R^n) \right) \cap C^1 \left([0,T); {H}^{\sigma+\frac{m}{2(m+1)}}(\R^n) \right)
	\end{equation*}
	to equation \eqref{intTricomi} for some $T>0$.
	
	Moreover, if $\e>0$ is small enough, then $T \gtrsim \e^{- \left(\frac{1}{p-1}+\frac{m}{2}\right)^{-1}} $.	
\end{thm}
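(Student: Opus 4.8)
\emph{Strategy.} The plan is to solve the integral equation \eqref{intTricomi} by a contraction argument, so that the Banach fixed-point theorem produces a unique local solution, and then to read off the lifespan lower bound from the smallness threshold required by the contraction. Writing $s := \sigma + \frac{m}{2(m+1)}$, I would work in the Banach space
\[
Y_T := C\left([0,T]; \dot H^{s+1}(\R^n)\right) \cap C^1\left([0,T]; H^{s}(\R^n)\right),
\]
with $\n{u}_{Y_T} := \sup_{0\le t\le T}\bigl( \n{u(t)}_{\dot H^{s+1}} + \n{u_t(t)}_{H^{s}}\bigr)$, and define $N$ by letting $Nu$ be the right-hand side of \eqref{intTricomi}. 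The goal is to produce $T>0$ and $R>0$ so that $N$ maps the closed ball $B_R\subset Y_T$ into itself and is a contraction there.

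\emph{Linear estimates.} The first ingredient is the mapping properties of the propagators $V_1(t,D_x)$, $V_2(t,D_x)$ and of the Duhamel kernel $V_2(t,D_x)V_1(s,D_x)-V_1(t,D_x)V_2(s,D_x)$. On the Fourier side these multipliers solve $\wh u_{tt}+t^{2m}|\xi|^2\wh u=0$, whose solutions are $\sqrt{t}$ times Bessel functions of argument $\frac{|\xi|}{m+1}t^{m+1}$ and order $\frac{1}{2(m+1)}=\frac12-\frac{m}{2(m+1)}$, consistent with the Gamma factors appearing in $a(m)$ in \eqref{conddata}. Following Yagdjian \cite{Yag2}, the small- and large-argument asymptotics of these symbols give $L^2$-based bounds of the schematic form $\n{V_1(t,D_x)f}_{\dot H^{s+1}}\lesssim (1+t)^{\alpha}\n{f}_{H^{\sigma+1}}$ and the analogue for $V_2$ acting on $g\in H^{\sigma+1-\frac1{m+1}}$, the regularity indices in the statement being precisely those afforded by the Bessel order; the restriction $0\le m<2$ guarantees the required symbol estimates. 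Applied to the Duhamel integral, these bounds control the nonlinear contribution by $\int_0^t (1+t)^{\alpha}(1+s)^{\beta}\,\n{|u_t(s)|^p}_{H^{\sigma}}\,ds$ with explicit, integrable weights in time.

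\emph{Nonlinear estimate and fixed point.} The second ingredient is the composition estimate $\n{|u_t|^p}_{H^{\sigma}}\lesssim \n{u_t}_{H^{s}}^{p}$, a Moser-type inequality. Its validity hinges on the constraints on $\sigma$: the left endpoint $\sigma>\frac n2-\frac{m}{2(m+1)}$ is exactly $s>\frac n2$, so that $H^{s}\hookrightarrow L^\infty$; the right endpoint $\sigma<p-1-\frac{m}{2(m+1)}$ matches the finite ($C^{p-1}$-type) smoothness of $z\mapsto|z|^p$ to the regularity at which $u_t$ is measured, while $p>2$ ensures $z\mapsto|z|^p$ is $C^2$ and $p>1+\frac n2$ is precisely what makes the interval for $\sigma$ nonempty. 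Inserting this together with the linear bounds, the self-mapping condition reduces to $\n{Nu}_{Y_T}\lesssim \e\bigl(\n{f}_{H^{\sigma+1}}+\n{g}_{H^{\sigma+1-\frac1{m+1}}}\bigr)+T^{\theta}R^{p}\le R$ and the contraction to $\n{Nu-Nv}_{Y_T}\lesssim T^{\theta}R^{p-1}\n{u-v}_{Y_T}$, where $\theta=1+\frac{m(p-1)}{2}$ collects the time weights from the Duhamel integral and one uses $\bigl||a|^p-|b|^p\bigr|\lesssim(|a|^{p-1}+|b|^{p-1})|a-b|$. Choosing $R\simeq\e$ and $T$ so that $T^{\theta}R^{p-1}$ equals a small absolute constant closes both conditions and yields the solution.

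\emph{Lifespan and main difficulty.} Solving $T^{\theta}R^{p-1}\simeq1$ with $R\simeq\e$ gives $T\gtrsim\e^{-(p-1)/\theta}$, and since $\frac{\theta}{p-1}=\frac{1}{p-1}+\frac m2$, this is exactly $T\gtrsim\e^{-\left(\frac{1}{p-1}+\frac m2\right)^{-1}}$. I expect the main obstacle to be the linear step: one must extract from the Bessel-type representation of $V_1,V_2$ the sharp Sobolev estimates with the correct shift $\frac{m}{2(m+1)}$ and the correct powers of $(1+t)$, uniformly for $0\le m<2$, and verify that the Duhamel kernel inherits the time weight producing exactly $\theta=1+\frac{m(p-1)}{2}$. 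Once these are in hand, the Moser estimate and the fixed-point scheme are routine. A secondary point requiring care is confirming that a single admissible $\sigma$ simultaneously provides the $L^\infty$-embedding and the top-order nonlinear bound, which is what forces the two-sided constraint on $\sigma$ and the hypothesis $p>\max\{2,1+\frac n2\}$.
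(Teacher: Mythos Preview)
Your contraction strategy, choice of regularity $s=\sigma+\frac{m}{2(m+1)}$, Moser-type estimate, and final lifespan computation all match the paper's approach. However, the norm you propose on $Y_T$ will not close the argument, and this is the one genuine gap.

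The issue is the behavior of the linear propagators near $t=0$ and for large $t$: one does \emph{not} get bounds of the form $(1+t)^{\alpha}$. The paper's Lemma~\ref{lem:Hestimates} gives
\[
\n{V_j(t,D_x)\psi}_{\dot H^{s+1}}\lesssim t^{-m/2}\n{\psi}_{\dot H^{\cdot}},
\qquad
\n{\partial_t V_2(t,D_x)\psi}_{H^{s}}\lesssim \langle t\rangle^{m/2}\n{\psi}_{H^{\cdot}},
\]
and Corollary~\ref{cor:WHest} gives the Duhamel kernel bounds
\[
\n{W_j(s,t,D_x)\psi}_{\dot H^{s+1}}\lesssim (ts)^{-m/2}\n{\psi},
\qquad
\n{\partial_t W_j(s,t,D_x)\psi}_{H^{s}}\lesssim (t/s)^{m/2}\n{\psi}.
\]
The $t^{-m/2}$ factor is a genuine singularity at $t=0$ for $m>0$ (reflecting the degeneracy of the Tricomi operator there): with your unweighted norm, $\sup_{t}\n{u(t)}_{\dot H^{s+1}}$ is infinite already for the homogeneous solution. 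The paper resolves this by building the weights into the norm,
\[
\n{v}_X=\sup_{0\le t\le T}\Bigl[t^{m/2}\n{v}_{\dot H^{s+1}}+\langle t\rangle^{-m/2}\n{v_t}_{H^{s}}\Bigr],
\]
after which the Duhamel term becomes $\int_0^t s^{-m/2}\n{v_t(s)}_{H^s}^p\,ds$. It is precisely the integrability of $s^{-m/2}$ near $s=0$ that forces $m<2$; this is not a symbol-estimate restriction as you suggest. With the weighted norm one obtains $\n{\Psi[v]}_X\lesssim \e(\cdots)+T^{1-m/2}\langle T\rangle^{mp/2}\n{v}_X^p$, and for large $T$ the exponent $1-\tfrac m2+\tfrac{mp}{2}=1+\tfrac{m(p-1)}{2}$ recovers your $\theta$ and the claimed lifespan. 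A minor correction: the Moser estimate should read $\n{|u_t|^p}_{H^{s}}\lesssim\n{u_t}_{H^{s}}^p$ (regularity $s$, not $\sigma$), since this is the norm fed into the Duhamel bounds; the constraint $s\in(n/2,p-1)$ is exactly your two-sided condition on $\sigma$.
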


As in Yagdjian \cite{Yag4} and Taniguchi and Tozaki \cite{TT80}, we introduce the differential operators $V_1(t,D_x)$ and $V_2(t,D_x)$ as follows. Set $z := 2i\phi(t)|\xi|$, $\phi(t) := \frac{t^{m+1}}{m+1}$ and $\mu := \frac{m}{2(m+1)}$. Then $V_1(t,D_x)$ and $V_2(t,D_x)$ are the Fourier multiplier
\begin{align*}
V_1(t,D_x)\psi = \F^{-1}[ V_1(t,|\xi|) \F\psi ],
\\
V_2(t,D_x)\psi = \F^{-1}[ V_2(t,|\xi|) \F\psi ],
\end{align*}
defined by the symbols
\begin{align*}
V_1(t,|\xi|) &:= e^{-z/2} \Phi( \mu, 2\mu; z),
\\
V_2(t,|\xi|) &:= t e^{-z/2} \Phi( 1-\mu, 2(1-\mu); z),
\end{align*}
where $\F,\F^{-1}$ are the Fourier transform and its inverse respectively, and $\Phi(a,c;z)$ is the confluent hypergeometric function. Recall that $\Phi(a,c;z)$ is an entire analytic function of $z$ such that
\begin{equation}\label{Phi0}
\Phi(a,c;z)=1+O(z)
\quad
\text{for $z\to 0$}
\end{equation}
%
and which satisfies the following differential relations (see e.g. \cite[Section 13.4]{AS}):
\begin{align}\label{derPhi1}
\frac{d^n}{d z^n} \Phi(a,c;z) &= \frac{(a)_n}{(c)_n} \Phi(a+n,c+n;z),
\\
\label{derPhi2}
\frac{d}{dz} \Phi(a,c;z) &= \frac{1-c}{z} \left[ \Phi(a,c;z) - \Phi(a,c-1;z) \right],
\end{align}
where $(x)_n=x(x+1)\cdots(x+n-1)$ is the Pochhammer’s symbol.
Moreover $\Phi(a,c;z)$ satisfies the estimate
\begin{equation}\label{estPhi}
|\Phi(a,c;2i\phi(t)|\xi|)| \le C_{a,c,m}
(\phi(t)|\xi|)^{\max\{a-c,-a\}}
\quad
\text{for $2\phi(t)|\xi| \ge 1$.}
\end{equation}

\begin{rem}
In the case of the wave equation, i.e. when $m=0$, the definitions of $V_1(t,D_x)$ and $V_2(t,D_x)$ should be understood taking the limit for $m\to0$ in their formulas. Indeed, using the identities 10.2.14, 13.6.3 and 13.6.14 in \cite{AS}, we get
\begin{align*}
	\lim_{m\to0} V_1(t,|\xi|) &= \lim_{\mu\to0} \Gamma\left(\mu+\frac{1}{2}\right) \left(\frac{z}{4}\right)^{1/2-\mu} I_{\mu-1/2}\left(\frac{z}{2}\right)
	= \cosh\left(\frac{z}{2}\right),
	\\
	\lim_{m\to0} V_2(t,|\xi|) &= \frac{2t}{z} \sinh\left(\frac{z}{2}\right),
\end{align*}
where $I_\nu(w)$ is the modified Bessel function of first kind. Thus for $m=0$ one recovers the well-known wave operators $V_1(t,D_x) = \cos(t\sqrt{-\Delta})$ and $V_2(t,D_x) = \frac{\sin(t\sqrt{-\Delta})}{\sqrt{-\Delta}}$.
\end{rem}

As Yagdjian observes, there are two different phase functions of two different waves hidden in $\Phi(a,c;z)$. More precisely, for $0<\arg z<\pi$, we can write (see also \cite{Inui62})
\begin{equation}\label{PhiH}
e^{-z/2} \Phi(a,c;z) =
\frac{\Gamma(c)}{\Gamma(a)} e^{z/2} H_+(a,c;z)
+
\frac{\Gamma(c)}{\Gamma(c-a)} e^{-z/2} H_-(a,c;z)
\end{equation}
where
\begin{align*}
H_+(a,c;z) &= \frac{e^{-i\pi(c-a)}}{e^{i\pi(c-a)}-e^{-i\pi(c-a)}} \frac{1}{\Gamma(c-a)} z^{a-c} \int_{\infty}^{(0+)} e^{-\omega} \omega^{c-a-1} \left(1- \frac{\omega}{z}\right)^{a-1} d\omega ,
\\
H_-(a,c;z) &= \frac{1}{e^{i\pi a}-e^{-i\pi a}} \frac{1}{\Gamma(a)} z^{-a} \int_{\infty}^{(0+)} e^{-\omega} \omega^{a-1} \left(1 + \frac{\omega}{z}\right)^{c-a-1} d\omega	.
\end{align*}
For $|z| \to \infty$ and $0<\arg z<\pi$, the following asymptotic estimates hold:
\begin{align*}
H_+(a,c;z) &\sim z^{a-c} \left[1 + \sum_{k=1}^{\infty} \frac{(c-a)_k (1-a)_k}{k!} z^{-k} \right],
\\
H_-(a,c;z) &\sim (e^{-i\pi}z)^{-a} \left[1 + \sum_{k=1}^{\infty} (-1)^k \frac{(a)_k (1+a-c)_k}{k!} z^{-k} \right].
\end{align*}
Combining the asymptotic estimates for $H_+(a,c;z)$ and $H_-(a,c;z)$ with their definitions, one can infer, for $2\phi(t)|\xi|\ge1$, the relations
\begin{align}
\label{estH+}
|\partial_t^k \partial_\xi^\beta H_+(a,c;2i\phi(t)|\xi|) |
&
\le C_{a,c,m,k,\beta} (\phi(t)|\xi|)^{a-c} \langle \xi \rangle^{\frac{k}{m+1}-|\beta|},
\\
\label{estH-}
|\partial_t^k \partial_\xi^\beta H_-(a,c;2i\phi(t)|\xi|) |
&
\le C_{a,c,m,k,\beta} (\phi(t)|\xi|)^{-a} \langle \xi \rangle^{\frac{k}{m+1}-|\beta|},
\end{align}
where $\langle \xi \rangle = (1+|\xi|^2)^{1/2}$ are the Japanese brackets.

Finally, let us introduce for simplicity of notation the operators
\begin{align*}
W_1(s,t,D_x) :=&\, V_1(t,D_x)V_2(s,D_x),
\\
W_2(s,t,D_x) :=&\, V_2(t,D_x)V_1(s,D_x),
\end{align*}
whose symbols, if we set $z:=2i\phi(t)|\xi|$ and $\zeta:= 2i\phi(s)|\xi|$, are given by
\begin{align*}
W_1(s,t,|\xi|) &= s e^{-(z+\zeta)/2}
\Phi(\mu,2\mu;z) \Phi(1-\mu,2(1-\mu);\zeta),
\\
W_2(s,t,|\xi|) &= t e^{-(z+\zeta)/2}
\Phi(\mu,2\mu;\zeta) \Phi(1-\mu,2(1-\mu);z).
\end{align*}

	The key estimates employed in the proof of Theorem\til\ref{thm:locex} are given in Corollary\til\ref{cor:WHest}, which comes straightforwardly from Theorem\til\ref{thm:estW}, and in Lemma\til\ref{lem:Hestimates}.
	The estimates in the following Theorem\til\ref{thm:estW}, which are of independent interest, are obtained adapting the argument exploited by Yagdjian \cite{Yag2} and Reissig \cite{Rei97a} for the case of the operators $V_1(t,D_x)$ and $V_2(t,D_x)$. In order to not weigh down the exposition, we postpone the proof of this theorem in Appendix\til\ref{app:YRest}.

\begin{thm}\label{thm:estW}
	Let $n\ge1$, $m \ge 0$, $\mu:=\frac{m}{2(m+1)}$ and $\psi \in C^\infty_0(\R^n)$.
	Then the following $L^q-L^{q'}$ estimates on the conjugate line, i.e. for $\frac{1}{q}+\frac{1}{q'}=1$, hold
	for $0<s\le t$ and for all admissible $q \in (1,2]$:
	\begin{enumerate}[label=(\roman*)]
		
		\item
		if $n \left(\frac{1}{q}-\frac{1}{q'}\right)
		- 1
		\le \sigma
		\le
		-\mu
		+
		n \left(\frac{1}{q}-\frac{1}{q'}\right)$,
		then
		\begin{equation*}
		\n{(\sqrt{-\Delta})^{-\sigma} W_1(s,t,D_x) \psi}_{L^{q'}}
		\\
		\lesssim
		(t/s)^{-m/2}
		s^{1+\left[\sigma-n\left(\frac{1}{q}-\frac{1}{q'}\right) \right](m+1)}
		\n{\psi}_{L^q};
		\end{equation*}
		
		\item
		if $n \left(\frac{1}{q}-\frac{1}{q'}\right)
		-1
		\le \sigma
		\le
		-1+\mu+
		n \left(\frac{1}{q}-\frac{1}{q'}\right)$,
		then
		\begin{equation*}
		\n{(\sqrt{-\Delta})^{-\sigma} W_2(s,t,D_x) \psi}_{L^{q'}}
		\\
		\lesssim
		(t/s)^{-m/2}
		s^{1+\left[\sigma-n\left(\frac{1}{q}-\frac{1}{q'}\right) \right](m+1)}
		\n{\psi}_{L^q};
		\end{equation*}
		
		\item
		if $n \left(\frac{1}{q}-\frac{1}{q'}\right)
		\le \sigma
		\le
		1 - \mu + n \left(\frac{1}{q}-\frac{1}{q'}\right)$,
		then
		\begin{equation*}
		\n{(\sqrt{-\Delta})^{-\sigma} \partial_t W_1(s,t,D_x) \psi}_{L^{q'}}
		\\
		\lesssim
		(t/s)^{m/2} s^{\left[\sigma-n \left( \frac{1}{q} - \frac{1}{q'} \right) \right](m+1)}
		\n{\psi}_{L^q};
		\end{equation*}
		
		\item
		if $\sigma
		=
		n \left(\frac{1}{q}-\frac{1}{q'}\right)$,
		then
		\begin{equation*}
		\n{(\sqrt{-\Delta})^{-\sigma} \partial_t W_2(s,t,D_x) \psi}_{L^{q'}}
		\\
		\lesssim
		(t/s)^{m/2}
		\n{\psi}_{L^q}.
		\end{equation*}
		
	\end{enumerate}
\end{thm}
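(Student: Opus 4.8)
The plan is to treat $(\sqrt{-\Delta})^{-\sigma}W_j$ and $(\sqrt{-\Delta})^{-\sigma}\partial_t W_j$ as radial Fourier multipliers and to bound their $L^q\to L^{q'}$ operator norm on the conjugate line by the Fourier-integral-operator machinery, adapting to the products the argument Yagdjian and Reissig use for the single operators $V_1,V_2$. First I would split frequency space into zones according to the sizes of $2\phi(t)|\xi|$ and $2\phi(s)|\xi|$: a low-frequency (elliptic) zone where both are $\lesssim1$, an intermediate zone where $2\phi(t)|\xi|\gtrsim1$ but $2\phi(s)|\xi|\lesssim1$, and a high-frequency (hyperbolic) zone where both are $\gtrsim1$. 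In the elliptic zone the expansion \eqref{Phi0} shows the symbols are smooth and essentially behave like $s|\xi|^{-\sigma}$ (resp. $t|\xi|^{-\sigma}$), so a Hardy--Littlewood--Sobolev type bound applies after a dyadic decomposition.

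In the remaining zones I would insert the splitting \eqref{PhiH} into each factor, writing the symbol of $W_1$ (and likewise $W_2,\partial_tW_j$) as a finite sum of genuinely oscillatory multipliers $e^{i(\pm\phi(t)\pm\phi(s))|\xi|}\,a(t,s,\xi)$, whose amplitudes are products of $H_\pm$-factors controlled, together with all their $\xi$-derivatives, by \eqref{estH+}--\eqref{estH-}. Using $\phi(t)^{-\mu}=(m+1)^{\mu}t^{-m/2}$ and $(m+1)\mu=m/2$, the amplitude of $(\sqrt{-\Delta})^{-\sigma}W_1$ at frequency $|\xi|\sim\lambda$ is of order $s\,t^{-m/2}\lambda^{-1-\sigma}$ in the hyperbolic zone and $s\,t^{-m/2}\lambda^{-\mu-\sigma}$ in the intermediate one. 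For each oscillatory piece localized at a dyadic scale $\lambda$ I would combine the trivial $L^2\to L^2$ bound (Plancherel) with a stationary-phase $L^1\to L^\infty$ kernel estimate for the phase $(\pm\phi(t)\pm\phi(s))|\xi|$, and interpolate to get the conjugate-line bound with a dispersive gain $(1+\lambda\rho)^{-\frac{n-1}{2}(\frac1q-\frac1{q'})}$, where $\rho=\pm\phi(t)\pm\phi(s)$.

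Summing the dyadic pieces is where the precise exponents are produced. The decisive point I expect is that the dominant contribution comes from the threshold scale $\lambda\sim\phi(s)^{-1}$ at which the factor evaluated at the smaller time $s$ turns hyperbolic; inserting $\lambda\sim\phi(s)^{-1}$ and $\phi(s)=s^{m+1}/(m+1)$ into the amplitude orders should reproduce exactly the claimed power $s^{1+[\sigma-n(\frac1q-\frac1{q'})](m+1)}$ together with the factor $(t/s)^{\mp m/2}$. The two admissibility constraints on $\sigma$ then appear as the two summation conditions: the lower bound $\sigma\ge n(\frac1q-\frac1{q'})-1$ guarantees convergence of the dyadic sum in the hyperbolic zone (so that it is governed by $\lambda\sim\phi(s)^{-1}$ from above rather than by arbitrarily high frequencies), whereas the upper bound is exactly the condition that the intermediate-zone sum be controlled at its top end $\lambda\sim\phi(s)^{-1}$ and not at $\lambda\sim\phi(t)^{-1}$ — this is why (i) and (ii) carry different upper bounds, since the intermediate amplitude order ($\lambda^{-\mu}$ versus $\lambda^{\mu-1}$) depends on which factor sits at the larger time. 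For the statements (iii)--(iv), differentiating in $t$ either falls on the phase, producing $\phi'(t)|\xi|=t^{m}|\xi|$, which raises the symbol order by one and turns $t^{-m/2}$ into $t^{m/2}$ (hence $(t/s)^{+m/2}$ and the shifted $\sigma$-window), or falls on the amplitude and is absorbed by the $k=1$ cases of \eqref{estH+}--\eqref{estH-}.

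The main obstacle I anticipate is precisely this bookkeeping of the dyadic summation across the three zones, carried out uniformly in the two parameters $0<s\le t$: one has to check that the transitional scales match, that the stationary-phase kernel bound holds uniformly in $\lambda$ with the correct dependence on $\rho=\pm\phi(t)\pm\phi(s)$ (including the degeneracy of $\phi'$ as $t\to0$), and that the endpoint cases of the $\sigma$-ranges, where a logarithmic divergence threatens, are handled. Establishing the $L^1\to L^\infty$ dispersive estimate for the oscillatory pieces with the sharp decay rate, which is the heart of the Yagdjian--Reissig method, is the genuinely technical ingredient; everything else is a matter of tracking exponents.
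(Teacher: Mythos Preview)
Your proposal is correct and follows essentially the same strategy as the paper: a three-zone frequency decomposition keyed to the two scales $\phi(t)|\xi|\sim1$ and $\phi(s)|\xi|\sim1$, the splitting \eqref{PhiH} into oscillatory pieces $e^{i(\pm\phi(t)\pm\phi(s))|\xi|}a(t,s,\xi)$, an $L^1\!-\!L^\infty$ stationary-phase (Littman-type) bound interpolated with Plancherel in the hyperbolic zone, and the identification of the $\sigma$-window as the summability constraints across zones.

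The only notable difference is tactical. You propose a dyadic decomposition in all three zones, with stationary phase in the intermediate zone as well. The paper instead handles the low and intermediate zones directly by H\"ormander's multiplier criterion (Lemma~\ref{lem:hormander}): after rescaling by $\phi(t)$ (low) or $\phi(s)$ (intermediate), the symbol is supported in a fixed ball, so a distribution-function bound on the symbol suffices and no oscillatory-integral machinery is needed there. The dyadic decomposition and Littman lemma are invoked only in the high zone, and the dyadic sum is then closed via Brenner's lemma (Lemma~\ref{lem:brenner}). This is cleaner than running stationary phase in the intermediate zone, where the phase $\rho=\pm\phi(t)\pm\phi(s)$ can be large while the amplitude is still governed by the Taylor expansion of one $\Phi$-factor; H\"ormander's lemma sidesteps that bookkeeping entirely. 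Your route would also work, but with more effort in the intermediate zone than is strictly necessary.
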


\begin{rem}
	As in Yagdjian \cite{Yag2}, it is easy to obtain similar estimates for the (homogeneous) Besov spaces and then for the Sobolev-Slobodeckij spaces.
\end{rem}

In the previous theorem, choosing $q=q'=2$, $\sigma=-1$ for $W_1(s,t,D_x)$ and $W_2(s,t,D_x)$ and $\sigma=0$ in the case of their derivatives, we immediately get the following corollary.

\begin{cor}\label{cor:WHest}
	The following estimates hold
	\begin{align*}
	\n{W_j(s,t,D_x) \psi}_{\dot{H}^{\gamma+1}}
	&\lesssim (t s)^{-m/2} \n{\psi}_{\dot{H}^\gamma},
	\\
	\n{\partial_t W_j(s,t,D_x) \psi}_{\dot{H}^\gamma}
	&\lesssim (t/s)^{m/2} \n{\psi}_{\dot{H}^\gamma},
	\\
	\n{\partial_t W_j(s,t,D_x) \psi}_{H^\gamma}
	&\lesssim (t/s)^{m/2} \n{\psi}_{H^\gamma},
	\end{align*}
	for $n\ge1$, $m \ge0$, $\gamma\in\R$ and $j\in\{1,2\}$.
\end{cor}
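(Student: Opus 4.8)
The corollary is declared to follow ``immediately'' from Theorem \ref{thm:estW}, so my plan is to specialize the parameters, read off the base ($\gamma=0$) estimates, and then lift them to arbitrary $\gamma\in\R$ by commuting Fourier multipliers. First I would check that the chosen exponents are admissible. Taking $q=q'=2$ forces $n\left(\frac{1}{q}-\frac{1}{q'}\right)=0$, so all the range conditions in Theorem \ref{thm:estW} collapse. For parts (i) and (ii) with $\sigma=-1$ one checks $-1\le-1\le-\mu$ and $-1\le-1\le-1+\mu$ respectively, both valid since $\mu=\frac{m}{2(m+1)}\in[0,\tfrac12)$ (here $\sigma=-1$ is the left endpoint). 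For parts (iii) and (iv) with $\sigma=0$ one checks $0\le0\le1-\mu$ and $\sigma=0$, again valid.

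Next I would simplify the prefactors. In (i)/(ii) the exponent of $s$ is $1+(\sigma-0)(m+1)=1-(m+1)=-m$, and $(t/s)^{-m/2}s^{-m}=t^{-m/2}s^{m/2}s^{-m}=(ts)^{-m/2}$; since $\n{(\sqrt{-\Delta})^{-\sigma}W_j\psi}_{L^2}=\n{W_j\psi}_{\dot{H}^{1}}$ and $\n{\psi}_{L^2}=\n{\psi}_{\dot{H}^{0}}$, this gives the base case $\n{W_j\psi}_{\dot{H}^{1}}\lesssim(ts)^{-m/2}\n{\psi}_{\dot{H}^{0}}$ (using (i) for $j=1$ and (ii) for $j=2$). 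In (iii)/(iv) the exponent of $s$ is $0$ and the prefactor is $(t/s)^{m/2}$, yielding $\n{\partial_t W_j\psi}_{\dot{H}^{0}}\lesssim(t/s)^{m/2}\n{\psi}_{\dot{H}^{0}}$ (using (iii) for $j=1$ and (iv) for $j=2$).

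Finally I would pass to general $\gamma$. Since $W_j(s,t,D_x)$ and $\partial_t W_j(s,t,D_x)$ act as Fourier multipliers in $x$, they commute with $(\sqrt{-\Delta})^\gamma$ (symbol $|\xi|^\gamma$) and with $\langle D_x\rangle^\gamma$ (symbol $\langle\xi\rangle^\gamma$). Hence for the first estimate
\[
\n{W_j\psi}_{\dot{H}^{\gamma+1}}
= \n{W_j(\sqrt{-\Delta})^\gamma\psi}_{\dot{H}^{1}}
\lesssim (ts)^{-m/2}\n{(\sqrt{-\Delta})^\gamma\psi}_{\dot{H}^{0}}
= (ts)^{-m/2}\n{\psi}_{\dot{H}^{\gamma}},
\]
and the same commutation applied to the base derivative bound gives the homogeneous estimate $\n{\partial_t W_j\psi}_{\dot{H}^{\gamma}}\lesssim(t/s)^{m/2}\n{\psi}_{\dot{H}^{\gamma}}$. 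For the inhomogeneous version I replace $(\sqrt{-\Delta})^\gamma$ by $\langle D_x\rangle^\gamma$ and use $H^0=\dot{H}^0=L^2$, so commuting $\langle D_x\rangle^\gamma$ past $\partial_t W_j$ in the $\gamma=0$ bound yields $\n{\partial_t W_j\psi}_{H^{\gamma}}\lesssim(t/s)^{m/2}\n{\psi}_{H^{\gamma}}$.

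There is essentially no hard step: all the analytic content lives in Theorem \ref{thm:estW}. The only points needing care are confirming that the endpoint values $\sigma=-1$ and $\sigma=0$ fall within the admissible ranges (which is where $\mu<\tfrac12$ is used) and the arithmetic reduction $(t/s)^{-m/2}s^{-m}=(ts)^{-m/2}$; the extension in $\gamma$ is automatic from the commutation of the Fourier multipliers, and by density it suffices to argue on $\psi\in C_0^\infty(\R^n)$ as in Theorem \ref{thm:estW}.
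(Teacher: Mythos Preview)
Your proposal is correct and matches the paper's approach exactly: the paper simply states that one chooses $q=q'=2$, $\sigma=-1$ for $W_1,W_2$ and $\sigma=0$ for their time-derivatives in Theorem~\ref{thm:estW}, and declares the corollary immediate. You have merely filled in the details the paper leaves implicit---the admissibility check for $\sigma$, the arithmetic $(t/s)^{-m/2}s^{-m}=(ts)^{-m/2}$, and the commutation of Fourier multipliers to pass from $\gamma=0$ to general $\gamma$.
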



Now we furnish estimates in the energy space $\dot{H}^\gamma(\R^n)$ and ${H}^\gamma(\R^n)$ also for $V_1(t,D_x), V_2(t,D_x)$ and their derivatives respect to time.

\begin{lem}\label{lem:Hestimates}
	Let $\gamma\in\R$, $m\ge0$ and $\mu := \frac{m}{2(m+1)}$. The following estimates hold:
	\begin{align*}
	\n{V_1(t,D_x) \psi}_{\dot{H}^{\gamma-\sigma}} &\lesssim t^{\sigma(m+1)} \n{\psi}_{\dot{H}^\gamma}
	&&\quad\text{for $-\mu \le \sigma \le 0$;}
	\\
	\n{V_2(t,D_x) \psi}_{\dot{H}^{\gamma-\sigma}} &\lesssim t^{\sigma(m+1)+1} \n{\psi}_{\dot{H}^\gamma}
	&&\quad\text{for $-1+\mu \le \sigma \le 0$;}
	\\
	\n{\partial_t V_1(t,D_x) \psi}_{{H}^{\gamma-\sigma}} &\lesssim t^{\sigma(m+1)-1} \n{\psi}_{{H}^\gamma}
	&&\quad\text{for $1-\mu \le \sigma \le 1$;}
	\\
	\n{\partial_t V_2(t,D_x) \psi}_{{H}^{\gamma-\sigma}} &\lesssim \langle t \rangle^{\sigma(m+1)} \n{\psi}_{{H}^\gamma}
	&&\quad\text{for $\mu \le \sigma $.}
	\end{align*}
\end{lem}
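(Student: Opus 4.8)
The plan is to reduce all four inequalities to pointwise bounds on the (radial) symbols via Plancherel's theorem, since the spaces are $L^2$-based. For instance, for the first estimate one has $\n{V_1(t,D_x)\psi}_{\dot H^{\gamma-\sigma}}^2=\ints|\xi|^{2(\gamma-\sigma)}|V_1(t,|\xi|)|^2|\widehat\psi(\xi)|^2\,d\xi$, so it suffices to prove the uniform multiplier bound $|V_1(t,|\xi|)|\lesssim t^{\sigma(m+1)}|\xi|^\sigma$; the other three are identical, with $|\xi|$ replaced by $\langle\xi\rangle$ in the inhomogeneous cases. The organizing observation is the scaling $\phi(t)|\xi|\sim t^{m+1}|\xi|$, so that $t^{\sigma(m+1)}|\xi|^\sigma\sim(\phi(t)|\xi|)^\sigma$; this is exactly why in every estimate the power of $t$ equals $(m+1)$ times the power of $|\xi|$. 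Writing $r:=\phi(t)|\xi|$, I would then split the analysis according to the dichotomy $2r\le1$ versus $2r\ge1$.

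In the low-frequency regime $2r\le1$ I would invoke \eqref{Phi0}, so that each symbol equals its explicit prefactor times $1+O(r)$ and is therefore bounded by that prefactor; in the high-frequency regime $2r\ge1$ I would use \eqref{estPhi}. For $V_1$ the parameters are $a=\mu,\,c=2\mu$, giving $\max\{a-c,-a\}=-\mu$, and for $V_2$ they are $a=1-\mu,\,c=2-2\mu$, giving $\max\{a-c,-a\}=-(1-\mu)$; in both cases $|e^{-z/2}|=1$ since $z=2i\phi(t)|\xi|$ is purely imaginary. For the first estimate this reduces to checking $1\lesssim r^\sigma$ when $r\le1/2$ (forcing $\sigma\le0$) and $r^{-\mu}\lesssim r^\sigma$ when $r\ge1/2$ (forcing $\sigma\ge-\mu$), which is precisely the stated range $-\mu\le\sigma\le0$; the second estimate is the same computation with $-\mu$ replaced by $-(1-\mu)$ and an extra explicit factor $t$.

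For the two time-derivative estimates I would first compute $\partial_t$ by the chain rule, using $\tfrac{dz}{dt}=2it^m|\xi|$ together with the contiguous relation \eqref{derPhi1}. This yields $\partial_t V_1=it^m|\xi|\,e^{-z/2}\bigl[\Phi(\mu+1,2\mu+1;z)-\Phi(\mu,2\mu;z)\bigr]$ and an analogous two-term expression for $\partial_t V_2$, whose leading term $e^{-z/2}\Phi(1-\mu,2-2\mu;z)$ (produced by differentiating the explicit factor $t$) tends to $1$ as $t\to0$. Bounding each $\Phi$ as above gives, for $r\ge1/2$, the amplitudes $|\partial_t V_1|\lesssim t^{m/2}|\xi|^{1-\mu}$ and $|\partial_t V_2|\lesssim r^\mu\sim t^{m/2}|\xi|^{\mu}$ (using $(m+1)\mu=m/2$). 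Because the targets now carry $\langle\xi\rangle^\sigma$, I would split further into $|\xi|\ge1$ and $|\xi|\le1$; the key remark is that in the corner $|\xi|\le1,\ r\ge1/2$ one necessarily has $\phi(t)\gtrsim1$, hence $t\gtrsim1$, which is what lets the bounded leading term of $\partial_t V_2$ be absorbed. This bounded term is precisely why the fourth estimate must carry $\langle t\rangle^{\sigma(m+1)}$ rather than $t^{\sigma(m+1)}$ and why it holds for all $\sigma\ge\mu$ with no upper restriction, whereas the purely first-order $\partial_t V_1$ produces the upper constraint $\sigma\le1$ from the opposite corner $|\xi|\ge1,\ r\le1/2,\ t\to0$.

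The routine portion is the Plancherel reduction and the single dichotomy handling $V_1$ and $V_2$. I expect the \emph{main obstacle} to be the time-derivative estimates in the inhomogeneous spaces: there one must perform the derivative computation carefully and then run a genuinely two-parameter case analysis in $|\xi|$ and $\phi(t)|\xi|$ at once, tracking that the endpoint exponents in $\sigma$ are exactly those forced by the small- and large-argument regimes (and by the distinction $|\xi|\lessgtr1$). As an alternative for the high-frequency part, one may instead insert the decomposition \eqref{PhiH} and use the derivative bounds \eqref{estH+}--\eqref{estH-}, which supply the gain $\langle\xi\rangle^{k/(m+1)}$ per time derivative directly; I expect this to give the same ranges with slightly cleaner bookkeeping.
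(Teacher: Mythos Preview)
Your proposal is correct and follows essentially the same approach as the paper: reduce via Plancherel to a pointwise bound on the symbol, then split into the regimes $\phi(t)|\xi|\lessgtr 1$ using \eqref{Phi0} and \eqref{estPhi}. The only cosmetic difference is that the paper avoids your additional $|\xi|\lessgtr 1$ case analysis in the inhomogeneous estimates: for $\partial_t V_1$ it simply uses $\langle\xi\rangle^{-\sigma}\le|\xi|^{-\sigma}$ (valid since $\sigma\ge 1-\mu>0$) and then bounds $|\xi|$ by powers of $t$ via $\phi(t)|\xi|\gtrless 1$, while for $\partial_t V_2$ it combines both regimes at once through the Peetre-type inequality $\langle\phi(t)|\xi|\rangle^{\mu}\le\langle\phi(t)\rangle^{\mu}\langle\xi\rangle^{\mu}$, which yields the $\langle t\rangle^{\sigma(m+1)}$ factor directly without isolating corners.
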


\begin{proof}
	By estimates \eqref{estPhi}, for the range of $\sigma$ in the hypothesis we have that
	\begin{align*}
	||\xi|^{-\sigma}
	V_1(t,|\xi|) |
	&\lesssim
	\left\{
	\begin{aligned}
	& |\xi|^{-\sigma} (\phi(t)|\xi|)^{-\mu} &\text{if $\phi(t)|\xi|\ge1$,}
	\\
	& |\xi|^{-\sigma} &\text{if $\phi(t)|\xi|\le1$,}
	\end{aligned}
	\right.
	\\
	&\le
	t^{\sigma(m+1)},
	\\
	||\xi|^{-\sigma}
	V_2(t,|\xi|) |
	&\lesssim
	\left\{
	\begin{aligned}
	& t |\xi|^{-\sigma} (\phi(t)|\xi|)^{\mu-1} &\text{if $\phi(t)|\xi|\ge1$,}
	\\
	& t |\xi|^{-\sigma} &\text{if $\phi(t)|\xi|\le1$,}
	\end{aligned}
	\right.
	\\
	&\le
	t^{\sigma(m+1)+1},
	\\
	|\langle \xi \rangle^{-\sigma}
	\partial_t V_1(t,|\xi|) |
	&\lesssim
	\left\{
	\begin{aligned}
	& t^m |\xi|^{1-\sigma} (\phi(t)|\xi|)^{-\mu} &\text{if $\phi(t)|\xi|\ge1$,}
	\\
	& t^m |\xi|^{1-\sigma} &\text{if $\phi(t)|\xi|\le1$,}
	\end{aligned}
	\right.
	\\
	&\le
	t^{\sigma(m+1)-1},
	\\
	| \langle \xi \rangle^{-\sigma}
	\partial_t V_2(t,|\xi|) |
	&\lesssim
	\left\{
	\begin{aligned}
	& \langle \xi \rangle^{-\sigma} (\phi(t)|\xi|)^{\mu} &\text{if $\phi(t)|\xi|\ge1$,}
	\\
	& \langle \xi \rangle^{-\sigma} [1+ \phi(t)|\xi| ] &\text{if $\phi(t)|\xi|\le1$,}
	\end{aligned}
	\right.
	\\
	&\le
	\langle \xi \rangle^{-\sigma}
	\langle \phi(t)|\xi| \rangle^\mu
	\\
	&\le
	\langle \phi(t) \rangle^\mu \langle \xi \rangle^{\mu-\sigma}
	\\
	&\lesssim
	\langle t \rangle^{\sigma(m+1)}.
	\end{align*}
	Consequently
	\begin{align*}
	\n{V_1(t,D_x) \psi}_{\dot{H}^{\gamma-\sigma}}
	=&\,
	\n{ |\xi|^{\gamma-\sigma} V_1(t,|\xi|) \widehat{\psi}}_{L^2}
	\\
	\le&\,
	\n{ |\xi|^{-\sigma} V_1(t,|\xi|) }_{L^\infty}
	\n{|\xi|^\gamma \widehat{\psi}}_{L^2}
	\\
	\le&\,
	t^{\sigma(m+1)} \n{\psi}_{\dot{H}^\gamma},
	\end{align*}	
	and similarly we can obtain the other estimates.
\end{proof}

\begin{rem}
	The previous lemma should be compared with  Lemma 3.2 in \cite{RWY2}, where similar estimates are obtained under the restriction $0<t\le T$, for some fixed positive constant $T$.
\end{rem}

Finally, let us also recall the following useful relations that come from an application of Theorems 4.6.4/2 \& 5.4.3/1 in \cite{RS11}.

\begin{lem}\label{lem:RS}
	The following estimates hold:
	\begin{enumerate}[label=(\roman*)]
		\item if $\gamma>0$ and $u,v \in H^\gamma(\R^n) \cap L^\infty(\R^n)$, then
		\begin{equation*}
		\n{u v}_{H^\gamma} \lesssim
		\n{u}_{L^\infty}\n{v}_{H^\gamma}
		+
		\n{u}_{H^\gamma} \n{v}_{L^\infty};
		\end{equation*}
		
		\item if $p>1$, $\gamma\in \left(\frac{n}{2},p\right)$ and $u \in H^\gamma(\R^n)$, then
		\begin{equation*}
		\n{|u|^p}_{H^\gamma} \lesssim
		\n{u}_{H^\gamma} \n{u}^{p-1}_{L^\infty}.
		\end{equation*}
	\end{enumerate}
\end{lem}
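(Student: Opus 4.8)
The plan is to recognize both inequalities as the classical \emph{tame product} and \emph{Moser composition} estimates in the Bessel-potential spaces $H^\gamma = H^\gamma(\R^n)$, and to prove them through a Littlewood--Paley / paraproduct decomposition. Fix a dyadic partition of unity and let $\Delta_j$, $S_j$ denote the usual frequency projectors onto $\{|\xi|\sim 2^j\}$ and $\{|\xi|\lesssim 2^j\}$, so that the $H^\gamma$-norm is equivalent to the square function $\big(\sum_j \langle 2^j\rangle^{2\gamma}\n{\Delta_j w}_{L^2}^2\big)^{1/2}$.

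For part (i), I would use Bony's decomposition $uv = T_u v + T_v u + R(u,v)$, with paraproducts $T_u v = \sum_j (S_{j-1}u)\,\Delta_j v$ and resonant remainder $R(u,v) = \sum_{|j-k|\le1}\Delta_j u\,\Delta_k v$. The two paraproducts are frequency-localized near $2^j$, so Bernstein's inequality together with $\n{S_{j-1}u}_{L^\infty}\lesssim\n{u}_{L^\infty}$ gives $\n{T_u v}_{H^\gamma}\lesssim\n{u}_{L^\infty}\n{v}_{H^\gamma}$ and, symmetrically, $\n{T_v u}_{H^\gamma}\lesssim\n{v}_{L^\infty}\n{u}_{H^\gamma}$. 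The only delicate piece is the resonant term, whose summands are supported in balls rather than annuli; here the hypothesis $\gamma>0$ is exactly what makes the dyadic sum $\n{R(u,v)}_{H^\gamma}\lesssim\big(\sum_j 2^{2\gamma j}\n{\Delta_j u\,\widetilde\Delta_j v}_{L^2}^2\big)^{1/2}$ converge, after which $\n{\Delta_j u\,\widetilde\Delta_j v}_{L^2}\le\n{\Delta_j u}_{L^2}\n{v}_{L^\infty}$ closes the estimate. This is precisely the Kato--Ponce tame product inequality.

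For part (ii), I would treat $F(u)=|u|^p$ by a Moser-type composition argument. Since $\gamma>n/2$, $H^\gamma$ embeds into $L^\infty$ and is a Banach algebra, so the composition is well defined. Writing the telescoping sum $F(u)=\sum_j[F(S_{j+1}u)-F(S_j u)]$ (using $F(0)=0$) and applying the mean value theorem, each increment equals $F'(\theta_j)\,\Delta_j u$ with $\theta_j$ lying between $S_j u$ and $S_{j+1}u$, whence $|F'(\theta_j)|=p|\theta_j|^{p-1}\lesssim\n{u}_{L^\infty}^{p-1}$. Summing these contributions in $H^\gamma$, again exploiting $\gamma>0$, yields $\n{|u|^p}_{H^\gamma}\lesssim\n{u}_{L^\infty}^{p-1}\n{u}_{H^\gamma}$. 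The role of the upper bound $\gamma<p$ is to guarantee that $F$ carries enough Hölder regularity for this argument: when $1<p<2$ the derivative $F'(s)=p|s|^{p-1}\sgn(s)$ is merely $C^{0,p-1}$, and $\gamma<p$ is the sharp threshold below which $|u|^p$ retains $\gamma$ derivatives.

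I expect the main obstacle to lie in part (ii): controlling the limited smoothness of the non-smooth nonlinearity $|u|^p$. Unlike a smooth $F$, where arbitrarily many derivatives can be estimated, here one must track exactly how the Hölder regularity of $F'$ interacts with the fractional order $\gamma$, and verify that the telescoping/paraproduct sums still converge precisely in the range $n/2<\gamma<p$. This is the point where the sharp hypotheses are consumed, and where, as the cited results of Runst and Sickel \cite{RS11} show, the machinery of Triebel--Lizorkin space theory streamlines what is otherwise a delicate case analysis.
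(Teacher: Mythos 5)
Note first that the paper contains no self-contained proof of this lemma: it is invoked verbatim as ``an application of Theorems 4.6.4/2 \& 5.4.3/1 in \cite{RS11}'' (Runst--Sickel), so your attempt is by construction a different, more ambitious route. Your part (i) is correct as sketched: Bony's decomposition, with the two paraproducts controlled via $\n{S_{j-1}u}_{L^\infty}\lesssim\n{u}_{L^\infty}$ and the resonant term summed in $H^\gamma$ using $\gamma>0$, is the standard proof of the tame product (Kato--Ponce) estimate, and it matches the content of Theorem 4.6.4/2 in \cite{RS11}.

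Part (ii), however, has a genuine gap at the step ``Summing these contributions in $H^\gamma$, again exploiting $\gamma>0$, yields the bound.'' The mean value theorem writes each increment as $F'(\theta_j)\,\Delta_j u$ with only an $L^\infty$ bound on the factor $F'(\theta_j)$; but multiplication by a merely bounded function is not bounded on $H^\gamma$ for $\gamma>0$, and the products $F'(\theta_j)\Delta_j u$ are not frequency-localized (indeed $\theta_j$, defined pointwise by the MVT, has no controlled regularity whatsoever), so neither the square-function characterization nor the $\gamma>0$ summation trick from part (i) applies. The standard repair (Meyer's argument) replaces $\theta_j$ by the multiplier $m_j:=\int_0^1 F'\bigl(S_j u+\tau\,\Delta_j u\bigr)\,d\tau$ and needs derivative bounds of the type $\n{\partial^\alpha m_j}_{L^\infty}\lesssim 2^{j|\alpha|}\n{u}_{L^\infty}^{p-1}$ up to order $|\alpha|>\gamma$; for smooth $F$ these follow from the chain rule, but here $F(s)=|s|^p$ has $F'(s)=p|s|^{p-1}\sgn(s)$ of finite H\"older regularity, so the bounds fail beyond low order and the fractional smoothness of $m_j$ must be tracked by hand --- precisely the delicate case analysis carried out in Theorem 5.4.3/1 of \cite{RS11}. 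You correctly locate this obstacle in your final paragraph, but locating it is not supplying it: as written, the telescoping argument breaks down exactly in the regime $\gamma$ close to $p$ that the lemma is designed to cover. (A small side remark: $\gamma<p$ is a convenient sufficient condition rather than the sharp threshold; in $L^2$-based spaces the borderline for $u\mapsto|u|^p$ is of the form $\gamma<p+\tfrac12$, as the model function $|x|^p$ shows.)
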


We can now start the proof of the local existence result.

\begin{proof}[Proof of Theorem\til\ref{thm:locex}]
	Let us consider the map
	\begin{align*}
	\Psi[v](t,x) =&\,
	\e V_1(t,D_x)  f(x) +  \e V_2(t,D_x)  g(x)
	\\
	&+
	\int_0^t [V_2(t,D_x)V_1(s,D_x) - V_1(t,D_x)V_2(s,D_x)] |v_t(s,x)|^p ds
	\end{align*}
	and the complete metric space
	\begin{equation*}
	X(a,T) := \left\{
	v \in C([0,T); \dot{H}^{\gamma+1}(\R^n)) \cap C^1([0,T); {H}^{\gamma}(\R^n))
	\colon
	\n{v}_X \le a
	\right\}
	\end{equation*}
	for some $a,T>0$ to be chosen later, where $\gamma := \sigma+ \mu$, $\mu:=\frac{m}{2(m+1)}$ and
	\begin{equation*}
	\n{v}_X :=
	\sup_{0\le t \le T}
	\left[ t^{m/2} \n{v}_{\dot{H}^{\gamma+1}} + \langle t \rangle^{-m/2} \n{v_t}_{{H}^\gamma} \right] .
	\end{equation*}
	Note that, since the operators $V_1(t,D_x)$ and $V_2(t,D_x)$ commutes, we have
	\begin{align*}
	\partial_t \Psi[v](t,x) =&\,
	\e \partial_t V_1(t,D_x)  f(x) +  \e \partial_t V_2(t,D_x)  g(x)
	\\
	&+
	\int_0^t [ \partial_t V_2(t,D_x)V_1(s,D_x) - \partial_t V_1(t,D_x)V_2(s,D_x)] |v_t(s,x)|^p ds.
	\end{align*}
	We want to show that $\Psi$ is a contraction mapping on $X(a,T)$.
	
	By Lemma \ref{lem:Hestimates} and the immersion $H^s(\R^n) \hookrightarrow \dot{H}^s(\R^n)$ for $s>0$, we get
	\begin{align*}
	\n{ V_1(t,D_x)  f }_{\dot{H}^{\gamma+1}}
	&
	\lesssim
	t^{-m/2}
	\n{  f }_{{H}^{\gamma-\mu+1}}
	\\
	\n{ V_2(t,D_x) g }_{\dot{H}^{\gamma+1}}
	&\lesssim
	t^{-m/2}
	\n{  g }_{{H}^{\gamma+\mu}}
	\\
	\n{ \partial_t V_1(t,D_x)  f }_{{H}^{\gamma}}
	&\lesssim t^{m/2}
	\n{  f }_{{H}^{\gamma-\mu+1}}
	\\
	\n{ \partial_t V_2(t,D_x)  g }_{{H}^{\gamma}}
	&\lesssim \langle t \rangle^{m/2}
	\n{  g }_{{H}^{\gamma+\mu}}.
	\end{align*}
	Moreover by Corollary\til\ref{cor:WHest} we infer
	\begin{align*}
	\n{V_2(t,D_x)V_1(s,D_x) |v_t(s,x)|^p}_{\dot{H}^{\gamma+1}}
	&\lesssim
	%
	(st)^{-m/2} \n{v_t(s,x)}^p_{{H}^\gamma},
	\\
	\n{V_1(t,D_x)V_2(s,D_x) |v_t(s,x)|^p}_{\dot{H}^{\gamma+1}}
	&\lesssim
	%
	(st)^{-m/2}  \n{v_t(s,x)}^p_{{H}^\gamma},
	%
	\\
	\n{\partial_t V_2(t,D_x)V_1(s,D_x) |v_t(s,x)|^p}_{{H}^\gamma}
	&\lesssim
	%
	(s/t)^{-m/2}  \n{v_t(s,x)}^p_{{H}^\gamma},
	\\
	\n{\partial_t V_1(t,D_x)V_2(s,D_x) |v_t(s,x)|^p}_{{H}^\gamma}
	&\lesssim
	%
	(s/t)^{-m/2}  \n{v_t(s,x)}^p_{{H}^\gamma},
	\end{align*}
	where we used the estimates
	\begin{equation*}
		\n{|v_t(s,x)|^p}_{\dot{H}^\gamma} \le \n{|v_t(s,x)|^p}_{{H}^\gamma} \lesssim \n{v_t(s,x)}^p_{{H}^\gamma},
	\end{equation*}
	which
	come from Lemma\til\ref{lem:RS} and Sobolev embeddings.
	
	From these estimates and from the fact that $t\langle t \rangle^{-1} <1$ for any $t>0$, we obtain
	\begin{multline*}
	t^{m/2}
	\n{\Psi[v](t,\cdot)}_{\dot{H}^{\gamma+1}}
	+
	\langle t \rangle^{-m/2}
	\n{\partial_t \Psi[v](t,\cdot)}_{{H}^{\gamma}}
	\\
	\lesssim
	\e \left[ \n{ f}_{{H}^{\gamma-\mu+1}} + \n{ g}_{{H}^{\gamma+\mu}} \right]
	+
	\int_0^t s^{-m/2} \n{v_t(s,\cdot)}^p_{{H}^\gamma} ds
	\end{multline*}
	and hence, since $m<2$, we get
	\begin{equation*}
	\n{\Psi[v]}_{X} \le
	C_0 \e \left[ \n{ f}_{{H}^{\gamma-\mu+1}} + \n{ g}_{{H}^{\gamma+\mu}} \right] + C_0
	T^{1-\frac{m}{2}}
	\langle T \rangle^{\frac{m}{2}p}\n{v}^p_X
	\end{equation*}
	for some constant $C_0>0$ independent of $\e$.
	Choosing $a$ sufficiently large and $T$ sufficiently small, i.e. $a \ge 2C_0 \e \left[ \n{ f}_{{H}^{\gamma-\mu+1}} + \n{ g}_{{H}^{\gamma+\mu}} \right] $ and
	$T^{1-\frac{m}{2}}
	\langle T \rangle^{\frac{m}{2}p} \le (2C_0a^{p-1})^{-1}$,
	we infer that $\Psi[v] \in X(a,T)$.
	
	Now we show that $\Psi$ is a contraction. Fixed $v, \widetilde{v} \in X(a,T)$, we have similarly as above
	\begin{multline}\label{contr}
	t^{m/2}
	\n{\Psi[v](t,\cdot)
		- \Psi[\widetilde{v}](t,\cdot)}_{\dot{H}^{\gamma+1}}
	+
	\langle t \rangle^{-m/2}
	\n{\partial_t \Psi[v](t,\cdot)
		-
		\partial_t \Psi[\widetilde{v}](t,\cdot)}_{{H}^{\gamma}}
	\\
	\lesssim
	\int_0^t s^{-m/2} \n{|v_t(s,\cdot)|^p - |\widetilde{v}_t(s,\cdot)|^p}_{{H}^\gamma} ds.
	\end{multline}
	Since we can write
	\begin{equation*}
	|v_t|^p-|\widetilde{v_t}|^p =
	2^{-p} p \int_{-1}^{1} (v_t-\widetilde{v_t})(v_t+\widetilde{v_t}+ \lambda(v_t-\widetilde{v_t}))|v_t+\widetilde{v_t}+ \lambda(v_t-\widetilde{v_t})|^{p-2} d\lambda
	\end{equation*}
	and recalling that $p>2$ and $\gamma\in(n/2,p-1)$, an application of Lemma\til\ref{lem:RS} combined with Sobolev embeddings give us
	\begin{align*}
	\n{|v_t|^p-|\widetilde{v_t}|^p}_{H^\gamma} \lesssim&\,
	\n{v_t-\widetilde{v_t}}_{L^\infty}
	\n{(|v_t|+|\widetilde{v_t}|)^{p-1}}_{H^\gamma}
	\\
	&+ \n{v_t-\widetilde{v_t}}_{H^\gamma}
	\n{(|v_t|+|\widetilde{v_t}|)^{p-1}}_{L^\infty}
	\\
	\lesssim&\,
	\n{v_t-\widetilde{v_t}}_{L^\infty}
	\left(
	\n{v_t}^{p-1}_{H^\gamma}
	+
	\n{\widetilde{v_t}}^{p-1}_{H^\gamma}
	\right)
	\\
	&+
	\n{v_t-\widetilde{v_t}}_{H^\gamma}
	\left(
	\n{v_t}^{p-1}_{L^\infty}
	+
	\n{\widetilde{v_t}}^{p-1}_{L^\infty}
	\right)
	\\
	\lesssim&\,
	\n{v_t-\widetilde{v_t}}_{H^\gamma}
	\left(
	\n{v_t}^{p-1}_{H^\gamma}
	+
	\n{\widetilde{v_t}}^{p-1}_{H^\gamma}
	\right).
	\end{align*}
	Inserting this inequality into \eqref{contr} we get
	\begin{equation*}
	\n{\Psi[v] - \Psi[\widetilde{v}]}_X
	\le C_1
	T^{1-\frac{m}{2}}
	\langle T \rangle^{\frac{m}{2}p}
	a^{p-1} \n{v-\widetilde{v}}_X
	\end{equation*}
	for some $C_1>0$, and so $\Psi$ is a contraction for $T^{1-\frac{m}{2}}
	\langle T \rangle^{\frac{m}{2}p} \le (C_1 a^{p-1})^{-1}$.
	By Banach fixed point theorem we conclude that there exists a unique $v \in X(a,T)$ such that $\Psi[v]=v$.
	
	As a by-product of the computations, from the conditions on $T$ and $a$ we can choose
	the existence time such that
	$T^{1-\frac{m}{2}}
	\langle T \rangle^{\frac{m}{2}p} = C \e^{-(p-1)}$
	for some $C>0$ independent of $\e$, hence $T \gtrsim \e^{- \left[\frac{1}{p-1}+\frac{m}{2}\right]^{-1}} $ for $\e$ small enough.	%
\end{proof}

\section{Blow-up via a test function method}\label{sec:blowup}


We come now to the proof of Theorem \ref{thmTricomi}, which heavily relies on a special test function, closely related to a time dependent function satisfying the following ordinary differential equation:
\begin{equation}\label{ODE}
	\l''(t) - 2m t^{-1} \l'(t) - t^{2m} \l(t) = 0,
\end{equation}
where $t>0$ and $m\in\R$.

\begin{lem}\label{lem1}
	The fundamental solutions $\l_-$, $\l_+$ of \eqref{ODE} are the functions defined by:
	\begin{itemize}
		\item if $m = -1$:
		\begin{equation*}
			\l_-(t) = t^{-\frac{1+\sqrt{5}}{2}},
			\qquad
			\l_+(t) = t^{-\frac{1-\sqrt{5}}{2}};
		\end{equation*}
		
		\item if $m \neq -1$:
		\begin{equation*}
		\l_-(t) = t^{m+1/2} K_{\frac{1}{2}+\frac{m}{2(m+1)}} \left( \frac{t^{m+1}}{|m+1|} \right),
		\quad
		\l_+(t) = t^{m+1/2} I_{\frac{1}{2}+\frac{m}{2(m+1)}} \left( \frac{t^{m+1}}{|m+1|} \right),
		\end{equation*}
		where $I_{\nu}(z), K_{\nu}(z)$ are the modified Bessel functions of the first and second kind respectively.
	\end{itemize}
\end{lem}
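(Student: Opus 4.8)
The plan is to reduce the ODE \eqref{ODE} to a modified Bessel equation by a power substitution followed by a change of the independent variable, handling the exceptional value $m=-1$ separately by an Euler-equation argument.

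First I would dispose of the case $m=-1$, in which \eqref{ODE} becomes $\l''(t) + 2t^{-1}\l'(t) - t^{-2}\l(t) = 0$. This is an equidimensional (Euler) equation, so the ansatz $\l = t^r$ produces the indicial equation $r(r-1) + 2r - 1 = r^2 + r - 1 = 0$, whose roots $r = \frac{-1\pm\sqrt5}{2}$ yield exactly the two stated power solutions. Since the roots are distinct, the corresponding powers are linearly independent and hence form a fundamental system.

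For $m\neq-1$ I would first set $\l(t) = t^{m+1/2}\mu(t)$. Substituting and collecting terms, the first-order coefficient collapses (the exponent $m+1/2$ is chosen precisely for this cancellation of the $t^{-1}\l'$ contribution), and after dividing through by the appropriate power of $t$ one is left with
\[
t^2\mu''(t) + t\mu'(t) - \big[\,t^{2m+2} + (m+\tfrac12)^2\,\big]\mu(t) = 0 .
\]
Next I would introduce the new variable $z = \dfrac{t^{m+1}}{|m+1|}$, which maps $t>0$ bijectively to $z>0$. Using $\dfrac{dz}{dt} = \sgn(m+1)\,t^m$ together with $t^{2m+2} = (m+1)^2 z^2$, a direct computation turns the equation into
\[
z^2\mu_{zz} + z\mu_z - (z^2 + \nu^2)\mu = 0,
\qquad
\nu^2 = \frac{(m+1/2)^2}{(m+1)^2},
\]
so that $\nu = \frac12 + \frac{m}{2(m+1)}$, matching the stated order. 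This is the modified Bessel equation of order $\nu$, whose fundamental system is $\{I_\nu(z),\,K_\nu(z)\}$. Undoing the substitutions gives $\l_\pm(t) = t^{m+1/2}\,I_\nu\!\big(\tfrac{t^{m+1}}{|m+1|}\big)$ and $t^{m+1/2}\,K_\nu\!\big(\tfrac{t^{m+1}}{|m+1|}\big)$, and their linear independence follows from the nonvanishing Wronskian $W\{K_\nu,I_\nu\}(z) = 1/z$.

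The computation is essentially routine: the only point requiring care is the bookkeeping of $\sgn(m+1)$ when $m<-1$, which is why the Bessel argument is written with $|m+1|$. In that regard the \emph{main obstacle} is not a genuine difficulty but rather the discovery of the correct pair of substitutions; once one recognizes that the coefficient $t^{2m}$ signals a Bessel reduction with new variable proportional to $t^{m+1}$, and that the power prefactor $t^{m+1/2}$ is dictated by the requirement of removing the first-order term, the rest is a standard verification.
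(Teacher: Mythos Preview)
Your argument is correct, and it takes a genuinely different route from the paper. The paper's proof is a direct \emph{verification}: it computes $\l_-'(t)$ and $\l_-''(t)$ (and likewise for $\l_+$) using the Bessel recurrence relations \eqref{ch2A6}--\eqref{ch2A7} together with the parity $K_{-\nu}=K_{\nu}$, and then checks by substitution that the ODE \eqref{ODE} is satisfied; independence is obtained from the Wronskian identity \eqref{ch2wr2}. A separate Appendix derives the formula heuristically for $m\in\N$ via a power-series ansatz. By contrast, you \emph{transform} the ODE into the modified Bessel equation through the substitution $\l(t)=t^{m+1/2}\mu(t)$ followed by $z=t^{m+1}/|m+1|$, which simultaneously explains where the solution comes from and proves it. Your approach is more self-contained and conceptual (one sees immediately why the order is $\nu=\tfrac12+\tfrac{m}{2(m+1)}$ and why $|m+1|$ appears), whereas the paper's approach avoids the change-of-variables bookkeeping at the cost of relying on several Bessel identities and leaving the origin of the formula to a separate appendix.
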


\begin{proof}
	The result trivially follows from straightforward computations based on formulas for Bessel functions collected in Appendix \ref{appA}. Instead in Appendix \ref{appB} we show a way to reach the expression of the solutions for $m\neq-1$.
	
	If $m = -1$, it is immediate to check that $\l_-$ and $\l_+$ are two independent solutions of \eqref{ODE}.
	Suppose now $m\neq-1$ and set $z=t^{m+1}/|m+1|$ and $s= \sgn\{m+1\}$ for simplicity. From \eqref{ch2A7} and \eqref{ch2A2}, we get
	\begin{equation*}
	\begin{split}
	\l_-'(t) =&
	\left(m+\frac{1}{2}\right) t^{m-1/2} K_{\frac{m+1/2}{m+1}}(z)
	+ s t^{2m+1/2} K'_{\frac{m+1/2}{m+1}}(z)
	\\
	=& \left(m+\frac{1}{2}\right) t^{m-1/2} K_{\frac{m+1/2}{m+1}}(z)
	- s t^{2m+1/2}\\
	&\times\left[ K_{-\frac{1}{2(m+1)}}(z) + s \left(m+\frac{1}{2}\right) t^{-m-1} K_{\frac{m+1/2}{m+1}}(z) \right]	
	\\
	=& - s t^{2m+1/2} K_{-\frac{1}{2(m+1)}}(z)
	\\
	=& - s t^{2m+1/2} K_{\frac{1}{2(m+1)}}(z),
	\end{split}
	\end{equation*}
	and
	\begin{equation*}
	\begin{split}
	\l_-''(t) =&
	- s \left(2m+\frac{1}{2}\right) t^{2m-1/2} K_{\frac{1}{2(m+1)}}(z)
	- t^{3m+1/2} K'_{\frac{1}{2(m+1)}}(z)
	\\
	=&
	- s \left(2m+\frac{1}{2}\right) t^{2m-1/2} K_{\frac{1}{2(m+1)}}(z)\\
	&+ t^{3m+1/2} \left[ K_{-\frac{m+1/2}{m+1}}(z) + s \frac{t^{-m-1}}{2} K_{\frac{1}{2(m+1)}}(z) \right]
	\\
	=& t^{3m+1/2} K_{\frac{m+1/2}{m+1}}(z) - 2m s\, t^{2m-1/2} K_{\frac{1}{2(m+1)}}(z)
	\\
	=& t^{2m} \l_-(t) + 2m t^{-1} \l_-'(t).
	\end{split}
	\end{equation*}
	Analogously, using \eqref{ch2A6} and \eqref{ch2A7}, we obtain
	\begin{align*}
	\l_+'(t) &= s t^{2m+1/2} I_{-\frac{1}{2(m+1)}}(z),
	\\
	\l_+''(t) &=
	t^{3m+1/2} I_{\frac{m+1/2}{m+1}}(z)  +
	2ms \, t^{2m-1/2} I_{-\frac{1}{2(m+1)}}(z)
	\\
	&= t^{2m} \l_+(t) + 2m \, t^{-1} \l_+(t).
	\end{align*}
	Then, it is clear that $\l_-$ and $\l_+$ solve equation \eqref{ODE} and, from relation \eqref{ch2wr2}, we can check that the Wronskian $W(t)=\l_-(t) \l_+'(t) - \l_+(t) \l_-'(t)$ is
	\begin{equation*}
	W(t) =
	s t^{3m+1} [  I_{-\frac{1}{2(m+1)}} K_{-\frac{1}{2(m+1)}+1} + I_{-\frac{1}{2(m+1)}+1} K_{-\frac{1}{2(m+1)}}](z) = (m+1) t^{2m} > 0
	\end{equation*}
	for $t>0$, hence the two solutions are independent.
\end{proof}

\begin{lem}\label{lem2}
	Suppose $m>-1/2$. Define $\mu := \frac{m}{2(m+1)}$ and
	\begin{equation*}
		\l(t) := t^{m+1/2} K_{\mu+\frac{1}{2}} \left( \frac{t^{m+1}}{m+1} \right).
	\end{equation*}
	Then, $\l \in C^1([0,+\infty)) \cap C^\infty(0,+\infty)$  and satisfies the following properties:
	\begin{enumerate}[label=(\roman*)]
	\item
	$\displaystyle
	\l(t) > 0$,
	$\displaystyle
	\l'(t) < 0$,
	
	\item
	$\displaystyle
	\lim_{t\to 0^+} \l(t) =
		2^{\mu-\frac{1}{2}} (m+1)^{\mu+\frac{1}{2}} \Gamma\left(\mu+\frac{1}{2}\right) =: c_0(\mu)>0$,
	
	\item
	$\displaystyle
	\lim_{t\to 0^+} \frac{\l'(t)}{t^{2m}} =
		-c_0(-\mu)<0$,
	
	\item
	$\displaystyle
	\l(t)
	= \sqrt{\frac{(m+1)\pi}{2}} \, t^{m/2} \exp\left( -\frac{t^{m+1}}{m+1}\right)
	\times (1+O(t^{-(m+1)}))$,
	\quad for large $t>0$,
	
	\item
	$\displaystyle
	\l'(t)=
	- \sqrt{\frac{(m+1)\pi}{2}} \, t^{3m/2} \exp\left( -\frac{t^{m+1}}{m+1}\right)
	\times (1+O(t^{-(m+1)}))$,
	\quad for large $t>0$.
	\end{enumerate}

\end{lem}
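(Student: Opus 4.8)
The plan is to reduce every assertion to standard facts about the modified Bessel function $K_\nu$: its positivity on $(0,\infty)$, its small- and large-argument asymptotics, and the recurrence relations collected in Appendix~\ref{appA}. Throughout I set $z=z(t):=\frac{t^{m+1}}{m+1}$, which is legitimate since $m>-1/2$ forces $m+1>1/2>0$. The hypothesis $m>-1/2$ also guarantees that both orders occurring below lie in $(0,1)$: indeed $\mu+\tfrac12\in(0,1)$ and, using $\tfrac12-\mu=\frac{1}{2(m+1)}$, also $\tfrac12-\mu\in(0,1)$; this is exactly the range in which the $z\to0^+$ asymptotics of $K_\nu$ produce a single dominant power. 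The decisive step is to borrow from the proof of Lemma~\ref{lem1} (specialised to $s=\sgn(m+1)=1$) the closed form for the derivative,
\[
\l'(t)=-\,t^{2m+1/2}\,K_{\frac{1}{2(m+1)}}(z)=-\,t^{2m+1/2}\,K_{\frac12-\mu}(z),
\]
so that $\l$ and $\l'$ are each a monomial in $t$ times a single $K_\nu(z)$.

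Granting this, parts (i)--(v) become bookkeeping. For (i), positivity of $K_\nu$ gives $\l(t)=t^{m+1/2}K_{\mu+1/2}(z)>0$ and $\l'(t)<0$ at once. For the behaviour at the origin I insert the small-argument formula $K_\nu(z)\sim\tfrac12\Gamma(\nu)(z/2)^{-\nu}$ (valid for $\nu>0$). The point is a clean cancellation of powers driven by the two identities
\[
(m+1)\Big(\mu+\tfrac12\Big)=m+\tfrac12,\qquad (m+1)\Big(\tfrac12-\mu\Big)=\tfrac12 .
\]
The first shows that in $\l$ the factor $t^{m+1/2}$ exactly cancels $z^{-(\mu+1/2)}$, leaving the finite limit $c_0(\mu)=2^{\mu-1/2}(m+1)^{\mu+1/2}\Gamma(\mu+\tfrac12)$ of (ii); the second shows that in $\l'$ the monomial $t^{2m+1/2}$ times $z^{-(1/2-\mu)}$ collapses to $t^{2m}$, so that $\l'(t)/t^{2m}$ converges to $-c_0(-\mu)$, which is (iii). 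It then remains only to verify that collecting the constants $\tfrac12$, the powers of $2$ and of $(m+1)$, and the Gamma factor reproduces the stated $c_0(\pm\mu)$.

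For the large-$t$ behaviour (iv)--(v) I use $K_\nu(z)=\sqrt{\pi/(2z)}\,e^{-z}\big(1+O(1/z)\big)$ as $z\to\infty$. Since $\sqrt{1/z}$ contributes $t^{-(m+1)/2}$ and $1/z=(m+1)t^{-(m+1)}=O(t^{-(m+1)})$, multiplying by $t^{m+1/2}$ (resp.\ $t^{2m+1/2}$) leaves $t^{m/2}$ (resp.\ $t^{3m/2}$) together with $e^{-t^{m+1}/(m+1)}$ and the prefactor $\sqrt{(m+1)\pi/2}$, which is precisely (iv) (resp.\ (v)). Smoothness on $(0,\infty)$ is immediate since $\l$ is there a composition of smooth functions; continuity at $0$ follows from (ii), while (iii) gives that $\lim_{t\to0^+}\l'(t)$ is finite (equal to $0$ if $m>0$ and to $-c_0(0)$ if $m=0$), yielding $\l\in C^1([0,\infty))$.

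The computations are routine; the only points needing genuine care are (a) confirming the index constraints $0<\mu+\tfrac12<1$ and $0<\tfrac12-\mu<1$, so that the chosen $K_\nu$ asymptotics are the correct ones and the limits are finite and nonzero, and (b) the arithmetic matching of the multiplicative constants to the exact forms $c_0(\pm\mu)$. I expect (b)---keeping track of the $(z/2)^{-\nu}$ versus $z^{-\nu}$ normalisations and the powers of $2$ and $(m+1)$---to be the most error-prone step, though it is not conceptually difficult.
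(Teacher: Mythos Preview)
Your proposal is correct and follows essentially the same approach as the paper's proof: both borrow the closed form $\l'(t)=-t^{2m+1/2}K_{\frac12-\mu}(z)$ from Lemma~\ref{lem1}, invoke positivity of $K_\nu$ for (i), and read off (ii)--(v) from the standard small- and large-argument asymptotics of $K_\nu$ in Appendix~\ref{appA}. If anything, your write-up is more careful than the paper's, since you explicitly verify the index constraints $0<\mu+\tfrac12<1$ and $0<\tfrac12-\mu<1$ and address the $C^1$ regularity at $t=0$ (which the paper passes over in silence); note, however, that your argument for $C^1$ at the origin only covers $m\ge0$, and indeed for $-1/2<m<0$ one has $\l'(t)\sim -c_0(-\mu)t^{2m}\to-\infty$, so the $C^1([0,\infty))$ claim in the lemma statement itself is only valid for $m\ge0$---but this is the only range used in the paper.
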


\begin{proof}
	From \eqref{ch2A6} we know that $\l$ is smooth for $t>0$. Since $K_{\nu}(z)$ is real and positive for $\nu\in\R$ and $z>0$, also $\l$ is real and positive.
	Recall from the proof of Lemma \ref{lem1} that
	\begin{equation*}
		\l'(t) = - t^{2m+1/2} K_{-\mu+\frac{1}{2}} \left(\frac{t^{m+1}}{m+1}\right),
	\end{equation*}
	and hence $\l'$ is negative.
	From \eqref{limit0} we have $\l(t) \sim c_0(\mu)$ and $\l'(t) \sim -c_0(-\mu)t^{2m}$ for $t\to0^+$,
	so we prove (ii) and (iii). Finally, from \eqref{ch2A12} we obtain (iv) and (v).
\end{proof}


We can start now the proof of our main theorem.

\begin{proof}[Proof of Theorem\til\ref{thmTricomi}]
As in \cite{ISWa}, let $\eta(t)\in C^{\infty}([0, +\infty))$ satisfy
\[
\eta(r) :=
 \left\{
 \begin{aligned}
 &1
 && \text{for $r\le\tfrac12$},
 \\
 &\text{decreasing}
 && \text{for $\tfrac12<r<1$},
 \\
 &0
 && \text{for $r\ge 1$},
 \end{aligned}
 \right.
\]
and denote, for $M\in (1, T)$,
\[
\eta_M(t) := \eta\left(\frac tM \right),
\qquad
\eta^0(t,x) := \eta\left( \frac{|x|}{2} \left(1+\frac{t^{m+1}}{m+1}\right)^{-1} \right).
\]
We remark that one can assume $1<T \le T_\e$, since otherwise our result holds obviously by choosing $\e$ small enough.
The last ingredient other than $\l$, $\eta_M$ and $\eta^0$ to construct the test function is
\[
\phi(x) :=
\left\{
\begin{aligned}
&\int_{S^{n-1}}e^{x\cdot\omega}d\omega &\text{if $n\ge2$,}
\\
& e^x + e^{-x} &\text{if $n=1$,}
\end{aligned}
\right.
\]
which satisfies
\begin{equation}
\label{testpro}
\Delta\phi=\phi,
\quad
0<\phi(x)\le C_0 (1+|x|)^{-\frac{n-1}{2}}e^{|x|},
\end{equation}
for some $C_0>0$.
Finally, we introduce our test function as
\begin{equation}\label{testfunc}
	\begin{aligned}
	\Phi(t, x)
	:=& -t^{-2m}\partial_t\left(\eta_M^{2p'}(t)\l(t)\right) \phi(x) \eta^0(t,x)
	\\
	=& -t^{-2m}\left(\p_t\eta_M^{2p'}(t)\l(t) + \eta_M^{2p'}(t)\l'(t)\right) \phi(x) \eta^0(t,x)
	,
	\end{aligned}
\end{equation}
where $M\in (1, T)$ and $p'=p/(p-1)$ is the conjugate exponent of $p$.
It is straightforward to check that $\Phi(t,x) \in C^1_0([0,+\infty)\times\R^n) \cap C^\infty_0((0,+\infty)\times\R^n)$ if we set
\begin{equation*}
	\Phi(0,x) := \lim_{t \to 0^+} \Phi(t,x) = c_0(-\mu) \phi(x) \eta\left(\frac{|x|}{2}\right) \ge 0,
\end{equation*}
where $c_0$ is defined in Lemma \ref{lem2}.(ii). Note also that
\begin{equation*}
	\displaystyle \Phi(t,x) = -t^{-2m}\partial_t\left(\eta_M^{2p'}(t)\l(t)\right) \phi(x)
\end{equation*}
in the cone defined in \eqref{suppcond}.

Taking $\Phi$ as the test function in the definition of weak solution \eqref{weaksol}, exploiting the compact support condition \eqref{suppcond} on $u$ and integrating by parts, we obtain
\begin{align*}
	\begin{split}
	& \, \e c_0(-\mu) \int_{\R^n} g\phi dx
	+
	\int_0^T\int_{\R^n} |u_t|^p t^{-2m} \eta_M^{2p'} |\l'|\phi  \, dxdt
	\\&+
	\int_0^T\int_{\R^n} |u_t|^p t^{-2m} |\partial_t \eta_M^{2p'}| \l \phi  \, dxdt
	\\
	=&
	\int_0^T\int_{\R^n} u_t
	t^{-2m} \left(-2mt^{-1}\, \partial_t \eta_M^{2p'} \l -2mt^{-1}\, \eta_M^{2p'} \l'
	+
	\partial_t^2 \eta_M^{2p'} \l + 2 \partial_t \eta_M^{2p'} \l' + \eta_M^{2p'} \l''
	\right) \phi
	 \, dxdt
	\\&-
	\int_0^T\int_{\R^n} \nabla u \cdot \nabla\phi \, \partial_t \left(\eta_M^{2p'} \l \right)  \, dxdt
	\\
	=&
	-\e c_0(\mu) \ints f \phi dx
	-2m\int_0^T\int_{\R^n}u_t
	t^{-2m-1} \partial_t \eta_M^{2p'} \l \phi
	 \, dxdt
	\\&+ \int_0^T\int_{\R^n}
	u_t t^{-2m} \left( \partial_t^2 \eta_M^{2p'} \l + 2 \partial_t \eta_M^{2p'} \l' \right) \phi  \, dxdt
	\\&
	\int_0^T\int_{\R^n}  u_t t^{-2m} \eta_M^{2p'} \left( \l'' -2m t^{-1} \l' - t^{2m}\l \right) \phi  \, dxdt .
	\end{split}
\end{align*}
Neglecting the third term in the left hand-side and recalling that $\l$ solve the ODE \eqref{ODE}, it follows that
\begin{equation}\label{thmtep2}
\begin{aligned}
& \, \e C_1
+ \int_0^T\int_{\R^n} |u_t|^p t^{-2m} \eta_M^{2p'}|\l'|\phi  \, dxdt ,
\\
\le&
-2m\int_0^T\int_{\R^n}u_{t}t^{-2m-1}\partial_t\eta_M^{2p'}\l \phi  \, dxdt
\\
&+
\int_0^T\int_{\R^n}u_{t}t^{-2m}\partial_t^2\eta_M^{2p'} \l \phi  \, dxdt
\\
&+2\int_0^T\int_{\R^n}u_{t}t^{-2m}\partial_t\eta_M^{2p'} \l' \phi  \, dxdt
\\
=:& \, I + II + III,
\end{aligned}
\end{equation}
where
\[
C_1 \equiv C_1(m,f,g) := c_0(\mu) \ints f\phi dx + c_0(-\mu) \ints g\phi dx >0
\]
is a positive constant thanks to \eqref{conddata}.

Now we will estimate the three terms $I, II, III$ by H\"{o}lder's inequality. Firstly let us define the functions
\begin{equation*}
\theta(t) :=
\left\{
\begin{aligned}
&0
&& \text{for $t<\tfrac12$,}
\\
&\eta(t) && \text{for $t\ge\tfrac12$,}
\end{aligned}
\right.
\qquad
\theta_{M}(t):=\theta\left(\frac tM \right),
\end{equation*}
for which it is straightforward to check the following relations hold:
\begin{align}
	\label{esteta'}
	|\partial_t \eta_M^{2p'}|
	&\le \frac{2p'}{M} \n{\eta'}_{L^\infty}
	\theta_M^{2p'/p},
	\\
	\label{esteta''}
	|\partial^2_t \eta_M^{2p'}|
	&\le
	\frac{2p'}{M^2} \left[(2p'-1)\n{\eta'}_{L^\infty}^2 + \n{\eta \eta''}_{L^\infty} \right]
	\theta_M^{2p'/p}.
\end{align}
From now on, $C$ will stand for a generic positive constant, independent of $\e$ and $M$, which can change from line to line.

Exploiting estimates in \eqref{testpro} and \eqref{esteta'}, the asymptotic behaviors (iv)-(v) in Lemma\til\ref{lem2} and the finite speed of propagation property \eqref{suppcond}, for $I$ we obtain
\begin{equation}\label{I}
\begin{aligned}
I=&-2m\int_0^T\int_{\R^n}u_{t}t^{-2m-1}\partial_t\eta_M^{2p'} \l \phi  \, dxdt
\\
\le& \, CM^{-2}
\left(\int_{\frac M2}^M\int_{|x|\le\gamma(t)}t^{-2m}|\l'|^{-\frac{1}{p-1}}|\l|^{\frac{p}{p-1}}\phi  \, dxdt \right)^{\frac{1}{p'}}
\left(\int_0^T\int_{\R^n}|u_t|^pt^{-2m}\theta_M^{2p'} |\l'|\phi  \, dxdt \right)^{\frac1p}
\\
\le& \, CM^{-2}
\left( \int_{\frac M2}^M \int_{0}^{1+\frac{t^{m+1}}{m+1}}
t^{-2m + \frac{mp-3m}{2(p-1)}} (1+r)^{\frac{n-1}{2}} \exp\left(r-\frac{t^{m+1}}{m+1}\right)
\, drdt \right)^{\frac{1}{p'}} \\
&\times
\left(\int_0^T\int_{\R^n}|u_t|^pt^{-2m}\theta_M^{2p'} |\l'|\phi  \, dxdt \right)^{\frac1p}
\\
\le& \, CM^{-2 - \frac{3m}{2} + \frac{m}{2p}  +\left[\frac{(m+1)(n-1)}{2}+1\right]\frac{p-1}{p}}
\left(\int_0^T\int_{\R^n}|u_t|^pt^{-2m}\theta_M^{2p'} |\l'|\phi  \, dxdt \right)^{\frac1p}.
\end{aligned}
\end{equation}
Analogously, for $II$ and $III$ we have
\begin{equation}\label{II}
\begin{aligned}
II=&\int_0^T\int_{\R^n}u_{t}t^{-2m}\partial_t^2\eta_M^{2p'} \l \phi \, dxdt \\
\le& \, CM^{-2 - \frac{3m}{2} + \frac{m}{2p}  +\left[\frac{(m+1)(n-1)}{2}+1\right]\frac{p-1}{p}}\\
&\times\left(\int_0^T\int_{\R^n}|u_t|^pt^{-2m}\theta_M^{2p'}|\l'|\phi \, dxdt \right)^{\frac1p},
\end{aligned}
\end{equation}
and
\begin{equation}\label{III}
\begin{aligned}
III=& \, 2\int_0^T\int_{\R^n}u_{t}t^{-2m}\partial_t\eta_M^{2p'} \l' \phi \, dxdt
\\
\le&\, CM^{-1}
\left(\int_{\frac M2}^M\int_{|x|\le\gamma(t)}t^{-2m}|\l'|\phi \, dxdt \right)^{\frac{1}{p'}}
\\
&\times
\left(\int_0^T\int_{\R^n}|u_t|^pt^{-2m}\theta_M^{2p'} |\l'| \phi \, dxdt \right)^{\frac1p}
\\
\le&\, CM^{-1-\frac{m}{2}+\frac{m}{2p} +\left[\frac{(m+1)(n-1)}{2}+1\right]\frac{p-1}{p}}\\
&\times\left(\int_0^T\int_{\R^n}|u_t|^pt^{-2m}\theta_M^{2p'} |\l'| \phi \, dxdt \right)^{\frac1p}.\\
\end{aligned}
\end{equation}
Since $m\ge -1$ is equivalent to
\[
-1-\frac{m}{2}+\frac{m}{2p} \ge -2-\frac{3m}{2} + \frac{m}{2p},
\]
we conclude, by plugging \eqref{I}, \eqref{II} and \eqref{III} in \eqref{thmtep2}, that
	\begin{equation}\label{thmtep4}
	\begin{aligned}
	&C_1\e+ \int_0^T\int_{\R^n}|u_t|^pt^{-2m}\eta_M^{2p'} |\l'|\phi \, dxdt \\
	\le& \, C M^{-1-\frac{m}{2}+\frac{m}{2p} +\left[\frac{(m+1)(n-1)}{2}+1\right]\frac{p-1}{p}}
	\\
	&\times\left(\int_0^T\int_{\R^n}|u_t|^pt^{-2m}\theta_M^{2p'} |\l'|\phi \, dxdt \right)^{\frac1p}.\\
	\end{aligned}
	\end{equation}

Define now the function
\[
Y[w](M):=\int_1^M\left(\int_0^T\int_{\R^n}w(t, x)\theta_\sigma^{2p'}(t) \, dxdt \right)\sigma^{-1}d\sigma
\]
and let us denote for simplicity
\begin{equation*}
	Y(M) := Y \left[|u_t|^pt^{-2m}|\l'(t)|\phi(x)\right]\!(M).
\end{equation*}
From direct computation we see that
\begin{equation}\label{Y}
\begin{aligned}
Y(M)
=&\int_1^M
\left(\int_0^T\int_{\R^n}|u_t|^pt^{-2m}|\l'(t)|\phi(x)\theta_\sigma^{2p'}(t) \, dxdt \right)\sigma^{-1}d\sigma\\
=&\int_0^T\int_{\R^n}|u_t|^pt^{-2m}|\l'(t)|\phi(x)\int_1^M\theta^{2p'}(t/\sigma)\sigma^{-1}d\sigma  \, dxdt \\
=&\int_0^T\int_{\R^n}|u_t|^pt^{-2m}|\l'(t)|\phi(x)\int_{\frac tM}^t\theta^{2p'}(s)s^{-1}ds  \, dxdt \\
\le&\int_{0}^T\int_{\R^n}|u_t|^pt^{-2m}|\l'(t)|\phi(x)\eta^{2p'}\left(\frac tM\right)\int_{\frac 12}^1 s^{-1}ds  \, dxdt \\
=& \ln2\int_0^T\int_{\R^n} |u_t|^p t^{-2m} \eta_M^{2p'}(t) |\l'(t)|\phi(x) \, dxdt,
\end{aligned}
\end{equation}
where we used the definition of $\theta(t)$. Moreover
\begin{equation}\label{Yderiv}
\begin{aligned}
Y'(M)=\frac{d}{dM}Y(M)=M^{-1}\int_0^T\int_{\R^n}|u_t|^p t^{-2m} \theta_M^{2p'}(t) |\l'(t)|\phi(x) \, dxdt .\\
\end{aligned}
\end{equation}
Hence by combining \eqref{thmtep4}, \eqref{Y} and \eqref{Yderiv}, we get
\begin{equation*}\label{IneqforY}
\begin{aligned}
M^{\left[\frac{(m+1)(n-1)-m}{2}\right](p-1)} Y'(M) \ge \left[C_1\e+ (\ln2)^{-1} Y(M)\right]^p,
\end{aligned}
\end{equation*}
which leads to
\begin{equation*}
\label{lifespanM}
M\le
 \left\{
 \begin{array}{ll}
 C\e^{-\left(\frac{1}{p-1}+\frac{m-(m+1)(n-1)}{2}\right)^{-1}} & \mbox{for}\ 1<p<p_T(n,m),\\
 \exp\left(C\e^{-(p-1)}\right) & \mbox{for}\ p=p_T(n,m).
 \end{array}
 \right.
\end{equation*}
Since $M$ is arbitrary in $(1, T)$, we finally obtain the lifespan estimates \eqref{lifespan2}.
\end{proof}

\section*{Appendices}

\appendix

\section{Some formulas for the modified Bessel functions}\label{appA}
For the reader's convenience, here we collect some formulas from Section~9.6 and Section~9.7 of the handbook by Abramowitz and Stegun \cite{AS}.
\begin{itemize}
	\item
	The solutions to the differential equation
	\begin{equation*}
	z^2 \frac{d^2}{dz^2} w(z) + z \frac{d}{dz} w(z) - \left(z^2 + \nu^2 \right) w(z) = 0
	\end{equation*}
	are the modified Bessel functions $I_{\pm\nu}(z)$ and $K_{\nu}(z)$.	
	$I_\nu(z)$ and $K_\nu(z)$ are real and positive when $\nu>-1$ and $z>0$.
	
	\item Relations between solutions:
	\begin{equation}
	\label{ch2A2}
	K_\nu(z) = K_{-\nu}(z) = \frac{\pi}{2} \frac{I_{-\nu}(z)-I_{\nu}(z)}{\sin(\nu\pi)} .
	\end{equation}
	When $\nu\in\Z$, the right hand-side of this equation is replaced by its limiting value. 
	
	\item Ascending series:
	\begin{equation}
	\label{ch2A4}
	I_{\nu}(z) = \sum_{k=0}^{\infty} \frac{(z/2)^{2k+\nu}}{k! \, \Gamma(k+1+\nu)},
	\end{equation}
	where $\Gamma$
	is the Gamma function.
	
	\item Wronskian:
	\begin{gather}
	\label{ch2wr2}
	I_{\nu}(z) K_{\nu+1}(z) + I_{\nu+1}(z) K_{\nu}(z) = \frac{1}{z}.
	\end{gather}
	
	\item Recurrence relations:
	\begin{align}
	\label{ch2A6}
	\partial_z I_{\nu}(z) &= I_{\nu+1}(z) + \frac{\nu}{z} I_{\nu}(z), &
	\partial_z K_{\nu}(z) &= -K_{\nu+1}(z) + \frac{\nu}{z} K_{\nu}(z), \\
	\label{ch2A7}
	\partial_z I_{\nu}(z) &= I_{\nu-1}(z) - \frac{\nu}{z} I_{\nu}(z), &
	\partial_z K_{\nu}(z) &= -K_{\nu-1}(z) - \frac{\nu}{z} K_{\nu}(z).
	\end{align}

	\item Limiting forms for fixed $\nu$ and $z \to 0$:
	\begin{alignat}{2}
	\label{limitI0}
	I_{\nu}(z) &\sim \frac{1}{\Gamma(\nu+1)} \left(\frac{z}{2}\right)^{\nu}
	&\quad &\text{for $\nu\neq -1,-2,\dots$}
	\\
	\label{limit00}
	K_0(z) &\sim -\ln(z), &\quad &\text{}
	\\
	\label{limit0}
	K_\nu(z) &\sim \frac{\Gamma(\nu)}{2} \left(\frac{z}{2}\right)^{-\nu},
	&\quad &\text{for $\Re\nu>0$.}
	\end{alignat}

	\item Asymptotic expansions for fixed $\nu$ and large arguments:
	\begin{align}
	\label{ch2A11}
	&I_\nu (z)
	= \frac{1}{\sqrt{2\pi }} z^{-1/2} e^z \times (1+O(z^{-1})),
	&&{\text{for $|z|$ large and $|\arg z| < \frac{\pi}{2}$,}}
	\\
	\label{ch2A12}
	&K_\nu (z)
	= \sqrt{\frac{\pi}{2}} z^{-1/2} e^{-z}  \times (1+O(z^{-1})),
	&&\text{for $|z|$ large and $|\arg z| < \frac{3}{2}\pi$.}
	\end{align}

\end{itemize}

\section{Solution formula for the ODE}\label{appB}
	We show in this section how to discover the formula of the solution for equation \eqref{ODE}. Let us suppose $m\in\N$ and make the ansatz
	\begin{equation}\label{ansatzl}
		\l(t) = \sum_{h=0}^{\infty} a_h t^h,
	\end{equation}
	for some constants $\{a_h\}_{h\in\N}$. Hence,
	\begin{equation*}
		\l'(t) = \sum_{h=1}^{\infty} h a_h t^{h-1},
		\qquad
		\l''(t) = \sum_{h=2}^{\infty} h(h-1) a_h t^{h-2}.
	\end{equation*}
	Substituting in \eqref{ODE} and multiplying by $t^2$, we get
	\begin{equation}\label{eq1}
	\begin{split}
		0 &=
		\sum_{h=2}^\infty h(h-1) a_h t^{h}
		-2m \sum_{h=1}^{\infty} h a_h t^{h}
		- \sum_{h=0}^{\infty} a_h t^{h+2m+2}
		\\
		&= \sum_{h=2}^\infty h(h-1) a_h t^{h}
		-2m \sum_{h=1}^{\infty} h a_h t^{h}
		- \sum_{h=2m+2}^{\infty} a_{h-2m-2} t^{h}
		\\
		&= \sum_{h=1}^{2m} h(h-2m-1) a_h t^{h} +
		\sum_{2m+2}^{\infty} [h(h-2m-1)a_h - a_{h-2m-2}] t^{h}.
	\end{split}
	\end{equation}
	Let us fix the constant $a_0$ and $a_{2m+1}$. We will write the other constants in dependence of these ones.
	Indeed, we infer from \eqref{eq1} that
	\begin{equation*}
		\begin{aligned}
		&a_h =0 &&\quad\text{for $h=1,\dots,2m$,}
		\\
		&a_h = \frac{a_{h-2m-2}}{h(h-2m-1)}
		&&\quad\text{for $h \ge 2m+2$.}
		\end{aligned}
	\end{equation*}
	Hence, by an inductive argument, we can prove that
	\begin{equation*}
	\begin{split}
		a_h &=
		\left\{
		\begin{aligned}
		& \frac{a_0}{[2(m+1)]^k \, k! \, \prod_{j=1}^{k} [2(m+1)j-(2m+1)]}
		&&\quad\text{if $h=2(m+1)k$, $k\in\N$,}
		\\
		& \frac{a_{2m+1}}{[2(m+1)]^k \, k! \, \prod_{j=1}^{k} [2(m+1)j+(2m+1)]}
		&&\quad\text{if $h=2(m+1)k+2m+1$, $k\in\N$,}
		\\
		&0 &&\quad\text{otherwise,}
		\end{aligned}
		\right.
		\\
		&=
		\left\{
		\begin{aligned}
		& \frac{[2(m+1)]^{-2k} \, \Gamma\left(1-\frac{m+1/2}{m+1}\right) }{k! \, \Gamma\left(k+1-\frac{m+1/2}{m+1}\right)} a_0
		&&\quad\text{if $h=2(m+1)k$, $k\in\N$,}
		\\
		& \frac{[2(m+1)]^{-2k} \, \Gamma\left(1+\frac{m+1/2}{m+1}\right) }{k! \, \Gamma\left(k+1+\frac{m+1/2}{m+1}\right)}a_{2m+1}
		&&\quad\text{if $h=2(m+1)k+2m+1$, $k\in\N$,}
		\\
		&0 &&\quad\text{otherwise,}
		\end{aligned}
		\right.
	\end{split}
	\end{equation*}
	where we used the relations
	\begin{equation*}
		\prod_{j=1}^{k} (cj \pm 1) = c^k \frac{\Gamma(k+1 \pm 1/c)}{\Gamma(1 \pm 1/c)}.
	\end{equation*}
	Substituting the values of $a_h$ into \eqref{ansatzl}, we have
	\begin{equation*}
	\begin{split}
		\l(t)
		=&\
		a_0 \Gamma\left( 1-\frac{m+1/2}{m+1} \right)
		\sum_{k=0}^{\infty}
		\frac{[2(m+1)]^{-2k}}{k! \, \Gamma\left(k+1-\frac{m+1/2}{m+1}\right)} t^{2(m+1)k}
		 \\ &+
		a_{2m+1} \Gamma\left( 1+\frac{m+1/2}{m+1} \right)
		t^{2m+1}
		\sum_{k=0}^{\infty}
		\frac{[2(m+1)]^{-2k}}{k! \, \Gamma\left(k+1+\frac{m+1/2}{m+1}\right)} t^{2(m+1)k}
		 \\ =&\
		c_- a_0
		t^{m+1/2}
		\sum_{k=0}^{\infty}
		\frac{1}{k! \, \Gamma\left(k+1-\frac{m+1/2}{m+1}\right)} \left[\frac{t^{m+1}}{2(m+1)}\right]^{2k-\frac{m+1/2}{m+1}}
		\\ &+
		c_+ a_{2m+1}
		t^{m+1/2}
		\sum_{k=0}^{\infty}
		\frac{1}{k! \, \Gamma\left(k+1+\frac{m+1/2}{m+1}\right)} \left[\frac{t^{m+1}}{2(m+1)}\right]^{2k+\frac{m+1/2}{m+1}},
	\end{split}
	\end{equation*}
	with
	\begin{equation*}
		c_\pm = \Gamma\left( 1 \pm \frac{m+1/2}{m+1} \right)
		[2(m+1)]^{\pm \frac{m+1/2}{m+1}}.
	\end{equation*}
	Taking into account the relations \eqref{ch2A4} and \eqref{ch2A2} we get
	\begin{equation*}
	\begin{split}
		\l(t) &=
		c_- a_0 t^{m+1/2} I_{-\frac{m+1/2}{m+1}} \left(\frac{t^{m+1}}{m+1}\right)
		+
		c_+ a_{2m+1} t^{m+1/2} I_{\frac{m+1/2}{m+1}} \left(\frac{t^{m+1}}{m+1}\right)
		\\
		&= k_1 t^{m+1/2} I_{\frac{m+1/2}{m+1}} \left(\frac{t^{m+1}}{m+1}\right)
		+
		k_2
		t^{m+1/2}
		K_{\frac{m+1/2}{m+1}} \left(\frac{t^{m+1}}{m+1}\right)
	\end{split}
	\end{equation*}
	with
	\begin{equation*}
		k_1 = c_- a_0 + c_+ a_{2m+1},
		\qquad
		k_2 = \frac{2}{\pi} c_- a_0 \sin\left(\frac{m+1/2}{m+1} \pi \right).
	\end{equation*}
	In this way we can deduce the fundamental solutions of the equation \eqref{ODE} when $m\in\N$, and from Lemma\til\ref{lem1} we know they also hold for $m \in \R\setminus\{-1\}$.

\section{Proof of Theorem\til\ref{thm:estW}}
\label{app:YRest}

In this appendix we prove the $L^p-L^q$ estimates on the conjugate line for $W_1(s,t,D_x)$, $W_2(s,t,D_x)$ and their derivatives respect to time collected in Theorem\til\ref{thm:estW}.
The argument is adapted from the proof of Theorem 3.3 by Yagdjian \cite{Yag2} (see also \cite{Rei97a} and \cite[Chapter 16]{ER18}), where similar estimates for $V_1(t,D_x)$ and $V_2(t,D_x)$ are presented. Note that in \cite{Yag2} the additional hypothesis $\sigma\ge0$ is supposed, but this can be dropped, as we will show.

Before to proceed, we recall the following key lemmata.

\begin{Def}
	Denote by $L^q_p \equiv L^q_p(\R^n)$ the space of tempered distributions $T$ such that
	\begin{equation*}
	\n{T * f}_{L^q} \le C \n{f}_{L^p}
	\end{equation*}
	for all Schwartz functions $f\in\mathcal{S}(\R^n)$ and a suitable positive constant $C$ independent of $f$.
	
	Denote instead with $M^q_p \equiv M^q_p(\R^n)$ the set of multiplier of type $(p,q)$, i.e. the set of Fourier transforms $\F(T)$ of distributions $T \in L^q_p$.
\end{Def}

\begin{lem}[\cite{Hor60}, Theorem 1.11 ]
	\label{lem:hormander}
	Let $f$ be a measurable function such that for all positive $\l$, we have
	\begin{equation*}
	\meas\{\xi \in \R^n \colon |f(\xi)| \le \l \} \le C \l^{-b}
	\end{equation*}
	for some suitable $b \in (1,\infty)$ and positive $C$. Then, $f \in M^q_p$ if $1<p\le2\le q < \infty$ and $1/p-1/q=1/b$.
\end{lem}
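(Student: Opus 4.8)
The plan is to deduce $f\in M^q_p$ from two elementary endpoint bounds for the multiplier operator $T_f g := \F^{-1}(f\,\F g)$ (so that $f\in M^q_p$ means precisely $T_f\colon L^p\to L^q$ boundedly) together with a real-interpolation, Marcinkiewicz-type argument, in the same spirit as the proof of the Hardy--Littlewood--Sobolev inequality, of which this lemma is a far-reaching generalization. The distributional hypothesis controls the level sets of $f$, placing it in a weak Lebesgue space $L^{b,\infty}(\R^n)$; I set $A$ equal to its quasinorm and decompose $f=\sum_{j\in\Z}f_j$ dyadically in amplitude, $f_j:=f\,\mathbb{1}_{E_j}$ with $E_j:=\{2^j\le|f|<2^{j+1}\}$, so that the hypothesis yields $\meas(E_j)\lesssim A^b 2^{-jb}$.

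For each piece I would record two trivial operator bounds. Since $\n{f_j}_{L^\infty}\le 2^{j+1}$, Plancherel gives the $L^2\to L^2$ estimate $\n{T_{f_j}}_{L^2\to L^2}\lesssim 2^j$; and since $\n{f_j}_{L^1}\le 2^{j+1}\meas(E_j)\lesssim 2^{j(1-b)}$, the Hausdorff--Young and Young inequalities give the $L^1\to L^\infty$ estimate $\n{T_{f_j}}_{L^1\to L^\infty}\lesssim 2^{j(1-b)}$. Because $b>1$, these two bounds have opposite monotonicity in $j$ (the first decays as $j\to-\infty$, the second as $j\to+\infty$), and this is exactly what will make the amplitude series summable.

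Next I would convert these into restricted weak-type estimates. Testing on $g=\mathbb{1}_{\mathcal E}$ with $\meas(\mathcal E)<\infty$, I split $f$ at a single amplitude threshold $s=s(\alpha,\meas(\mathcal E))$ into a bounded part and an $L^1$ part, estimate the bounded part through its $L^2\to L^2$ bound and Chebyshev's inequality, arrange the threshold so that the $L^1$ part contributes nothing to $\{|T_f g|>\alpha\}$, and optimize in $s$. To reach a genuinely off-diagonal target pair $(p,q)$ rather than only the self-dual point $q=p'$ produced by the two canonical endpoints, I would run the same optimization using intermediate off-diagonal bounds obtained by interpolating each $f_j$'s two endpoint estimates, thereby producing restricted weak-type estimates at two distinct pairs $(p_0,q_0)$, $(p_1,q_1)$ straddling $(p,q)$ along the line $\tfrac1p-\tfrac1q=\tfrac1b$. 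Marcinkiewicz interpolation between these two restricted weak-type estimates then delivers the strong-type bound $T_f\colon L^p\to L^q$; the restriction $1<p\le2\le q<\infty$ together with the duality reduction $T_f^*=T_{\bar f}$, $M^q_p\simeq M^{p'}_{q'}$, covers all admissible exponents.

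The main obstacle is precisely the off-diagonal summation. The two natural endpoints $(L^1\to L^\infty)$ and $(L^2\to L^2)$ interpolate, by Riesz--Thorin, only along the conjugate diagonal $q=p'$, and interpolating them piece by piece yields a bound that is merely \emph{uniform} in $j$, hence non-summable. The real content of the proof is therefore to produce, at two off-diagonal pairs on the line $\tfrac1p-\tfrac1q=\tfrac1b$, endpoint estimates carrying genuine geometric decay $2^{\pm cj}$ in \emph{both} tails, so that the dyadic amplitude decomposition can be summed. This is exactly what the weak-$L^b$ hypothesis, as opposed to a mere $L^b$ hypothesis, is calibrated to furnish, and it is the step where the Lorentz-space and real-interpolation formalism is indispensable.
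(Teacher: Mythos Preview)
The paper does not prove this lemma at all: it is quoted verbatim as Theorem~1.11 of H\"ormander~\cite{Hor60} and used as a black box, so there is no ``paper's own proof'' to compare against. (Incidentally, the inequality in the displayed hypothesis is almost certainly a typo in the paper---it should read $|f(\xi)|\ge\lambda$, i.e.\ the weak-$L^b$ condition; you have silently corrected this, which is right.)

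Your sketch is in the right spirit and would eventually work, but it is more convoluted than necessary, and the paragraph you flag as ``the main obstacle'' is where the argument is thinnest. Riesz--Thorin on the two piecewise bounds $\n{T_{f_j}}_{2\to2}\lesssim 2^j$ and $\n{T_{f_j}}_{1\to\infty}\lesssim 2^{j(1-b)}$ gives only pairs with $q=p'$, and you then propose to manufacture two \emph{off}-diagonal restricted weak-type endpoints straddling the target and apply Marcinkiewicz. That can be made rigorous, but you never actually say how you obtain those off-diagonal endpoints with geometric decay in $j$; ``interpolating each $f_j$'s two endpoint estimates'' still lands you on the conjugate line, not off it. H\"ormander's own route is cleaner: he first proves a Paley-type inequality (his Theorem~1.10) asserting that under the weak-$L^b$ hypothesis one has $T_f\colon L^p\to L^p$-weighted (equivalently an $L^p\to L^{p,b}$-type bound) for $1<p\le2$; its dual gives the companion estimate on the other side of $2$, and a single Riesz--Thorin/Stein interpolation between these two produces the full range $1<p\le2\le q<\infty$, $1/p-1/q=1/b$ directly, with no need for the dyadic amplitude decomposition or the Marcinkiewicz step.
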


\begin{lem}[\cite{Bre75}, Lemma 2]
	\label{lem:brenner}
	Fix a nonnegative smooth function $\chi \in C^\infty_0([0,\infty))$ with compact support $\supp\chi \subset \{x \in \R^n \colon 1/2 \le |x| \le 2 \}$ such that $\sum_{k=-\infty}^{\infty} \chi(2^{-k} x)=1$ for $x\neq0$. Set $\chi_k(x) := \chi(2^{-k}x)$ for $k\ge1$ and $\chi_0(x) := 1 - \sum_{k=1}^{\infty} \chi_k(x)$, so that $\supp\chi_0 \subset \{x \in \R^n \colon |x| \le 2 \}$.
	
	Let $a\in L^\infty(\R^n)$, $1<p\le2$ and assume that
	\begin{equation*}
	\n{ \F^{-1}(a \chi_k \widehat{v})}_{L^{p'}}
	\le C \n{v}_{L^p}
	\quad
	\text{for $k\ge0$.}
	\end{equation*}
	Then for some constant $A$ independent of $a$ we have
	%
	\begin{equation*}
	\n{ \F^{-1}(a \widehat{v})}_{L^{p'}}
	\le AC \n{v}_{L^{p}}.
	\end{equation*}	
\end{lem}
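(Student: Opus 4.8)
The plan is to deduce the assertion from its natural Besov-space counterpart. Concretely, I would first prove that, under the same hypothesis, the multiplier operator $Tv := \F^{-1}(a\widehat v)$ satisfies $\n{Tv}_{B^0_{p',q}} \le AC \n{v}_{B^0_{p,q}}$ for every $q\ge1$, and then recover the stated $L^p$--$L^{p'}$ bound by specializing to $q=2$ and chaining the two elementary embeddings $L^p \hookrightarrow B^0_{p,2}$ (valid for $1<p\le2$) and $B^0_{p',2} \hookrightarrow L^{p'}$ (valid for $2\le p'<\infty$). Both embeddings are consequences of the Littlewood--Paley square-function characterization of $L^r$ for $1<r<\infty$ together with Minkowski's inequality comparing the mixed norms $L^r_x(\ell^2_k)$ and $\ell^2_k(L^r_x)$; since the direction of that inequality reverses at $r=2$, the hypothesis $1<p\le2$ (equivalently $p\le2\le p'$) is exactly what makes both embeddings point the useful way.

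For the Besov estimate, write $T_k v := \F^{-1}(a\chi_k\widehat v)$, so that $Tv = \sum_k T_k v$, and denote by $S_j v := \F^{-1}(\chi_j\widehat v)$ the Littlewood--Paley blocks associated to the given dyadic cut-offs. Two structural facts drive the argument: first, the annuli $\supp\chi_k$ have bounded overlap, so $\chi_j\chi_k\equiv0$ whenever $|j-k|\ge2$; second, since $\sum_k\chi_k\equiv1$, localizing $Tv$ to the $j$-th block collapses to a near-diagonal sum, $S_j(Tv) = \F^{-1}(\chi_j a\widehat v) = \sum_{|k-j|\le1} S_j(T_k v)$. The essential trick is to feed \emph{frequency-localized} data into the hypothesis: fixing a fattened cut-off $\wt{\chi}_k\equiv1$ on $\supp\chi_k$ and setting $\wt{S}_k v := \F^{-1}(\wt{\chi}_k\widehat v)$, one has $a\chi_k\widehat v = a\chi_k\widehat{\wt{S}_k v}$, so the hypothesis applied to $\wt{S}_k v$ gives $\n{T_k v}_{L^{p'}} = \n{T_k\wt{S}_k v}_{L^{p'}} \le C\n{\wt{S}_k v}_{L^p}$.

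Combining these, and using that the projections $S_j$ are uniformly bounded on $L^{p'}$ (because $\n{\F^{-1}\chi_j}_{L^1}$ is bounded uniformly in $j$ by dilation invariance, so Young's inequality applies), I would estimate each block by $\n{S_j(Tv)}_{L^{p'}} \le \sum_{|k-j|\le1}\n{S_j(T_k v)}_{L^{p'}} \lesssim \sum_{|k-j|\le1}\n{T_k v}_{L^{p'}} \le C\sum_{|k-j|\le1}\n{\wt{S}_k v}_{L^p}$. Taking the $\ell^q$ norm in $j$ and applying Young's convolution inequality to the finitely-supported near-diagonal kernel collapses the double sum, yielding $\n{Tv}_{B^0_{p',q}} \lesssim C\big(\sum_k\n{\wt{S}_k v}_{L^p}^q\big)^{1/q}\approx C\n{v}_{B^0_{p,q}}$. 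Finally, with $q=2$ the two embeddings give $\n{Tv}_{L^{p'}} \lesssim \n{Tv}_{B^0_{p',2}} \lesssim C\n{v}_{B^0_{p,2}} \lesssim C\n{v}_{L^p}$, which is the claim.

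The step I expect to be the crux is the localization in the second paragraph: one must exploit the finite overlap of the dyadic annuli both to reduce $\chi_j a$ to the neighbour sum $\sum_{|k-j|\le1}\chi_j\, a\chi_k$ and to legitimately replace $v$ by its block $\wt{S}_k v$ inside the hypothesis. This is what turns the bare uniform bound $\n{T_k v}_{L^{p'}}\le C\n{v}_{L^p}$ --- which, summed naively over $k$, would diverge --- into a \emph{summable} family carrying the $\ell^2$ gain. Everything else (uniform boundedness of $S_j$ and the two Besov embeddings) is standard Littlewood--Paley theory, the only point requiring care being that the inequalities $L^p\hookrightarrow B^0_{p,2}$ and $B^0_{p',2}\hookrightarrow L^{p'}$ both hold precisely because $1<p\le2\le p'$.
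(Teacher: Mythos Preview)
The paper does not prove this lemma; it is quoted verbatim as Lemma~2 from Brenner \cite{Bre75} and used as a black box in Appendix~C. Your argument is correct and is in fact the classical proof: the commented-out line in the paper's statement (the Besov bound $\n{\F^{-1}(a\widehat v)}_{B^{0q}_{p'}}\le AC\n{v}_{B^{0q}_{p}}$ followed by ``and in particular'') shows that the authors had exactly your two-step route in mind --- first the Besov estimate via the near-diagonal Littlewood--Paley reduction, then the $L^p$--$L^{p'}$ bound via the embeddings $L^p\hookrightarrow B^0_{p,2}$ and $B^0_{p',2}\hookrightarrow L^{p'}$ for $1<p\le2\le p'$. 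One small bookkeeping point: in this inhomogeneous decomposition $\chi_0$ is a ball cut-off rather than an annulus, so the overlap condition reads $\chi_j\chi_k\equiv0$ for $|j-k|\ge2$ with $j,k\ge0$, but your near-diagonal argument goes through unchanged.
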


\begin{lem}[Littman type Lemma, see Lemma 4 in \cite{Bre75}]
	\label{lem:littman}
	Let $P$ be a real function, smooth in a neighbourhood of the support of $v \in C_0^\infty(\R^n)$. Assume that the rank of the Hessian matrix $(\partial^2_{\eta_j \eta_k} P(\eta))_{j,k\in\{1,\dots,n\}}$ is at least $\rho$ on the support of $v$. Then for some integer $N$ the following estimate holds:
	\begin{equation*}
	\n{\F^{-1}(e^{itP(\eta)}v(\eta))}_{L^\infty} \le C (1+|t|)^{-\rho/2} \sum_{|\alpha| \le N} \n{\partial_\eta^\alpha v}_{L^1}.
	\end{equation*}
\end{lem}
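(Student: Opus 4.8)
The plan is to reduce the statement to a non-degenerate stationary phase estimate carried out in $\rho$ of the $n$ variables, applied to
\begin{equation*}
	I(x) = \F^{-1}(e^{itP(\eta)}v(\eta))(x) = c_n \int_{\R^n} e^{i(x\cdot\eta + tP(\eta))} v(\eta)\,d\eta .
\end{equation*}
First I would dispose of the range $|t|\le1$: there the integrand has modulus $|v(\eta)|$, so $\n{I}_{L^\infty}\le c_n\n{v}_{L^1}$, and since $(1+|t|)^{-\rho/2}\ge 2^{-\rho/2}$ on this range the claim is trivial. From now on I assume $|t|>1$ and treat $\Phi(\eta):=x\cdot\eta+tP(\eta)$ as the phase, noting that its Hessian equals $t\,(\partial^2_{\eta_j\eta_k}P)_{j,k}$, hence has rank at least $\rho$ with its $\rho$ nonzero eigenvalues of size comparable to $|t|$, uniformly in $x$.

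Next I would localize with a smooth partition of unity $1=\sum_\ell \chi_\ell$ subordinate to a finite cover of $\supp v$ by small balls $B_\ell$ centred at points $\eta_\ell$. Diagonalizing the symmetric matrix $\partial^2 P(\eta_\ell)$ by a fixed orthogonal change of variables and keeping $\rho$ coordinates attached to nonzero eigenvalues, I may assume, after shrinking $B_\ell$, that in the new coordinates $\eta=(\eta',\eta'')$ with $\eta'\in\R^\rho$ the block $\partial^2_{\eta'}P$ is uniformly elliptic on $B_\ell$ (eigenvalues bounded away from $0$ and $\infty$). This change of variables has unit Jacobian and turns $x\cdot\eta$ into another linear term, so it affects neither the $L^1$ norms nor the uniformity in $x$. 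Freezing $\eta''$ and $x$, the inner integral in $\eta'$ then carries the phase $\psi(\eta'):=x'\cdot\eta'+tP(\eta',\eta'')$ whose $\eta'$-Hessian is $t\,\partial^2_{\eta'}P$, non-degenerate with eigenvalues $\sim|t|$.

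The heart of the proof is the stationary phase estimate in the $\rho$ variables $\eta'$. The $\eta'$-gradient $x'+t\nabla_{\eta'}P$ has the non-degenerate Jacobian $t\,\partial^2_{\eta'}P$, so $\psi$ has at most one critical point $\eta'_c$ in $B_\ell$. Where no critical point is present the gradient is bounded below, and repeated integration by parts with $|\nabla\psi|^{-2}\nabla\psi\cdot\nabla$ gives decay faster than any power of $|t|$, at the cost of finitely many derivatives of the amplitude. Near $\eta'_c$ the Morse lemma straightens $\psi$ to $\psi(\eta'_c)+t\sum_{i=1}^\rho \pm y_i^2$, reducing the integral to a product of $\rho$ one-dimensional Fresnel integrals, each contributing a factor $|t|^{-1/2}$; hence
\begin{equation*}
	\Big| \int_{\R^\rho} e^{i\psi(\eta')} (\chi_\ell v)(\eta',\eta'')\,d\eta' \Big|
	\le C|t|^{-\rho/2} \sum_{|\alpha|\le N} \n{\partial_{\eta'}^\alpha (\chi_\ell v)(\cdot,\eta'')}_{L^1(\R^\rho)},
\end{equation*}
uniformly in $x'$ and $\eta''$. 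Integrating this over $\eta''\in\R^{n-\rho}$ in $L^1$ and summing over the finitely many $\chi_\ell$ yields
\begin{equation*}
	\n{I}_{L^\infty} \le C|t|^{-\rho/2}\sum_{|\alpha|\le N}\n{\partial_\eta^\alpha v}_{L^1},
\end{equation*}
which, combined with the trivial range $|t|\le1$, gives the asserted $(1+|t|)^{-\rho/2}$ decay.

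The main obstacle I anticipate is securing the $\rho$-dimensional estimate \emph{uniformly} in the shift $x'$ and in the frozen variables $\eta''$: decay must be extracted from exactly the $\rho$ elliptic directions while the remaining $n-\rho$ directions are controlled only trivially in $L^1$, and the linear term $x'\cdot\eta'$ can place the critical point inside or outside the support, so the stationary and non-stationary cases must be merged into a single bound with constants independent of $x$. Tracking how many derivatives $N$ of $v$ are consumed by the integration by parts and by the Morse reduction is the delicate bookkeeping, whereas the localization and the orthogonal reduction to a uniformly elliptic block are routine.
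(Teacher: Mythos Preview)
The paper does not prove this lemma at all: it is stated as a citation (``Littman type Lemma, see Lemma 4 in \cite{Bre75}'') and then used as a black box in Appendix~\ref{app:YRest}. So there is no ``paper's own proof'' to compare against.

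Your outline is the standard route to such Littman--type estimates and is essentially correct. A couple of points worth sharpening if you intend this as a self-contained argument. First, in the non-stationary regime you write that ``the gradient is bounded below'' when there is no critical point in $B_\ell$; but the would-be critical point $\eta'_c=\eta'_c(x',\eta'')$ may sit just outside the support, making $|\nabla_{\eta'}\psi|$ small near the boundary. The clean fix is to observe that the ellipticity of $\partial^2_{\eta'}P$ gives $|\nabla_{\eta'}\psi(\eta')|\sim |t|\,|\eta'-\eta'_c|$, and then run a dyadic decomposition in $|\eta'-\eta'_c|$ (or a single cutoff at scale $|t|^{-1/2}$) rather than a bare ``stationary vs.\ non-stationary'' dichotomy; this is what makes the constants genuinely uniform in $x'$ and $\eta''$, the very issue you flag in your last paragraph. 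Second, the Morse-lemma change of variables depends smoothly on the parameters $(x',\eta'')$ only through $\eta'_c$, and its Jacobian is bounded uniformly because $\partial^2_{\eta'}P$ is; this is what lets you pull the $L^1$ norms of $\partial^\alpha v$ back through the coordinate change without picking up $x$-dependent factors. With these two refinements your sketch becomes a complete proof.
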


We will prove now only estimate (iii) of Theorem\til\ref{thm:estW}, since the computation for estimates (i) and (ii) are completely analogous; about estimate (iv), we will sketch the proof since it could be strange to the reader that this is the only case where the range of $\sigma$ collapses to be only a value.

	First of all, let us set $\tau := t/s \ge 1$, $z=2i\phi(t)\xi$, $\zeta=2i\phi(s)\xi$ and let us introduce the smooth functions $X_0, X_1, X_2 \in C^\infty(\R^n;[0,1])$ satisfying
	\begin{align*}
	X_0(x) &=
	\begin{cases}
	1 & \text{for $|x| \le 1/2$,} \\
	0 & \text{for $|x| \ge 3/4 $,}
	\end{cases}
	\\
	X_2(x) &=
	\begin{cases}
	1 & \text{for $ |x| \ge 1$,} \\
	0 & \text{for $ |x| \le 3/4$,}
	\end{cases}
	\\
	X_1(x) &= 1- X_0(\tau^{m+1} x) - X_2(x) .
	\end{align*}
	In particular, observe that \[X_0(\phi(t)\xi) + X_1(\phi(s)\xi) + X_2(\phi(s)\xi) \equiv 1\] for $0<s\le t$ and  $\xi\in\R^n$.

	By relations \eqref{derPhi1} and \eqref{derPhi2}, it is straightforward to get
	\begin{equation*}
	\begin{split}
	\partial_t V_1(t,|\xi|) =&\,
	\frac{m+1}{2} t^{-1} z e^{-z/2} [\Phi(\mu+1,2\mu+1;z)-\Phi(\mu,2\mu;z)]
	\\
	\partial_t V_2(t,|\xi|) =&\,
	e^{-z/2} \left[ \Phi(1-\mu,1-2\mu;z) - \frac{m+1}{2} z \Phi(1-\mu,2(1-\mu);z)\right].
	\end{split}
	\end{equation*}
	Thus one can check, using identity \eqref{PhiH}, that
	\begin{align*}
	\partial_t W_1(s,t,|\xi|) =&\,
	i s t^{m} e^{-(z+\zeta)/2} |\xi| [\Phi(\mu+1,2\mu+1;z)-\Phi(\mu,2\mu;z)]
	\Phi(1-\mu,2(1-\mu);\zeta)
	\\
	=&\,		
	i s t^{m} e^{-\zeta/2} |\xi| [e^{z/2} {H}^0_+(z) + e^{-z/2} {H}^0_-(z) +]
	\Phi(1-\mu,2(1-\mu);\zeta)
	\\
	=&\,		
	i s t^{m} |\xi| \left[e^{[1+\tau^{m+1}]\zeta/2} {H}^0_+(z) {H}^1_+(\zeta)
	+
	e^{-[1-\tau^{m+1}]\zeta/2} {H}^0_+(z) {H}^1_-(\zeta)
	\right.
	\\
	&\qquad\qquad
	\left.
	+
	e^{[1-\tau^{m+1}]\zeta/2} {H}^0_-(z) {H}^1_+(\zeta)
	+
	e^{-[1+\tau^{m+1}]\zeta/2} {H}^0_-(z) {H}^1_-(\zeta)
	\right]
	%
	\\
	%
	\partial_t W_2(s,t,|\xi|) =&\,
	e^{-(z+\zeta)/2} \Phi(\mu,2\mu;\zeta)\left[ \Phi(1-\mu,1-2\mu;z) - i t^{m+1} |\xi| \Phi(1-\mu,2(1-\mu);z)\right]
	\\
	=&\,
	e^{-\zeta/2} \Phi(\mu,2\mu;\zeta)
	[e^{z/2} H^2_+(z) + e^{-z/2} H^2_-(z)]
	\\
	=&\,
	e^{[1+\tau^{m+1}]\zeta/2} {H}^2_+(z) {H}^3_+(\zeta)
	+
	e^{-[1-\tau^{m+1}]\zeta/2} {H}^2_+(z) {H}^3_-(\zeta)
	\\
	&\quad
	+
	e^{[1-\tau^{m+1}]\zeta/2} {H}^2_-(z) {H}^3_+(\zeta)
	+
	e^{-[1+\tau^{m+1}]\zeta/2} {H}^2_-(z) {H}^3_-(\zeta)
	\end{align*}
	where for the simplicity we set
	\begin{align*}
	{H}^0_\pm(z) :=&\,
	\frac{\Gamma(2\mu+1)}{\Gamma\left(\mu+\frac{1}{2} \pm \frac{1}{2} \right)} H_\pm(\mu+1,2\mu+1;z) - \frac{\Gamma(2\mu)}{\Gamma(\mu)} H_\pm(\mu,2\mu;z)
	,
	\\
	{H}^1_{\pm}(\zeta) :=&\,
	\frac{\Gamma(2(1-\mu))}{\Gamma(1-\mu)} {H}_{\pm}(1-\mu,2(1-\mu);\zeta),
	\end{align*}
	and
	\begin{align*}
	H^2_\pm(z) :=&\,
	\frac{\Gamma(1-2\mu)}{\Gamma\left(\frac{1}{2}\pm\frac{1}{2}-\mu\right)} H_\pm(1-\mu,1-2\mu;z)
	\\
	&
	-i t^{m+1} |\xi|
	\frac{\Gamma(2(1-\mu))}{\Gamma(1-\mu)}
	H_\pm(1-\mu,2(1-\mu);z) ,
	\\
	H^3_\pm(\zeta) :=&\,
	\frac{\Gamma(2\mu)}{\Gamma(\mu)} H_\pm(\mu,2\mu;\zeta).
	\end{align*}
	
	
	\textbf{Estimates at low frequencies for $\partial_t W_1(s,t,D_x)$.} Let us consider the Fourier multiplier
	\begin{equation*}
	\F^{-1}_{\xi\to x}\left(X_0(\phi(t)\xi) |\xi|^{-\sigma} \partial_t W_1(s,t,|\xi|) \widehat{\psi}\right).
	\end{equation*}
	By the change of variables $\eta := \phi(t)\xi$ and $x:=\phi(t)y$ we get
	\begin{multline*}
	\n{F^{-1}_{\xi\to x} \left( X_0(\phi(t)\xi) |\xi|^{-\sigma} \partial_t W_1(s,t,|\xi|) \widehat{\psi} \right)}_{L^{q'}}
	\\
	\lesssim
	\tau^{-1} t^{(n/q'-n+\sigma)(m+1)}
	\n{T_0 * \F^{-1}_{\eta\to y}\left(\widehat{\psi}(\eta/\phi(t))\right)}_{L^{q'}}
	\end{multline*}
	where
	\begin{align*}
	T_0 :=&\, \F^{-1}_{\eta\to y} \left(
	X_0(\eta) |\eta|^{1-\sigma} e^{-i[1+1/\tau^{m+1}]|\eta|}
	{\Phi}_0(\mu;\tau;|\eta|)
	\right)
	\\
	{\Phi}_0(\mu;\tau;|\eta|)
	:=&\,
	[\Phi(\mu+1,2\mu+1;2i|\eta|)-\Phi(\mu,2\mu;2i|\eta|)]
	\\
	&\times \Phi(1-\mu,2(1-\mu);2i|\eta|/\tau^{m+1})
	\\
	=&\,
	O(|\eta|)[1+\tau^{-(m+1)}O(|\eta|)].
	\end{align*}
	The last equality above is implied by \eqref{Phi0}, from which we deduce $|{\Phi}_0(\mu;\tau;|\eta|)|\lesssim 1$ if $|\eta|\le 3/4$. So, for any $\l>0$, we obtain
	\begin{align*}
	\meas\{ \eta\in\R^n \colon |\F_{y \to \eta}(T_0)| \ge \l\}
	&\le
	\meas\{ \eta\in\R^n \colon \text{$|\eta| \le 3/4$ and $|\eta|^{1-\sigma} \gtrsim \l$} \}
	\\
	&\lesssim
	\begin{cases}
	1 & \text{if $0 < \l \le 1$,} \\
	0 & \text{if $\l\ge 1$ and $\sigma \le 1$,} \\
	\l^{-\frac{n}{\sigma-1}} & \text{if $\l\ge 1$ and $\sigma > 1$,}
	\end{cases}
	\\
	&\lesssim \l^{-b},
	\end{align*}
	where $1<b<\infty$ if $\sigma \le 1$ and $1 <b \le \frac{n}{\sigma-1}$ if $\sigma>1$.
	Hence by Lemma \ref{lem:hormander}, we get $T_0 \in L^{q'}_q$ for $1<q\le 2 \le q'<\infty$ and $\sigma \le 1 + n (\frac{1}{q}-\frac{1}{q'})$.
	Then we obtain the Hardy-Littlewood type inequality
	\begin{equation*}
	\n{F^{-1}_{\xi\to x} \left( X_0(\phi(t)\xi) |\xi|^{-\sigma} \partial_t W_1(s,t,|\xi|) \widehat{\psi} \right)}_{L^{q'}}
	\lesssim
	\tau^{-1} t^{\left[\sigma-n \left( \frac{1}{q} - \frac{1}{q'} \right) \right](m+1)}
	\n{\psi}_{L^q}.
	\end{equation*}
	Observing that, by the assumption on the range of $\sigma$,
	\begin{align*}
	\tau^{-1} t^{\left[\sigma-n \left( \frac{1}{q} - \frac{1}{q'} \right) \right](m+1)}
	&=
	\tau^{-\left[1-\mu + n \left( \frac{1}{q} - \frac{1}{q'} \right)  -\sigma \right](m+1)}
	\tau^{m/2} s^{\left[\sigma-n \left( \frac{1}{q} - \frac{1}{q'} \right) \right](m+1)}
	\\&\le
	\tau^{m/2} s^{\left[\sigma-n \left( \frac{1}{q} - \frac{1}{q'} \right) \right](m+1)},
	\end{align*}
	we finally get
	\begin{equation}\label{Zpd}
	\n{F^{-1}_{\xi\to x} \left( X_0(\phi(t)\xi) |\xi|^{-\sigma} \partial_t W_1(s,t,|\xi|) \widehat{\psi} \right)}_{L^{q'}}
	\lesssim
	\tau^{m/2} s^{\left[\sigma-n \left( \frac{1}{q} - \frac{1}{q'} \right) \right](m+1)}
	\n{\psi}_{L^q}.
	\end{equation}
	
	
	\textbf{Estimates at intermediate frequencies for $\partial_t W_1(s,t,D_x)$.}
	We proceed similarly as before. Let us consider now the Fourier multiplier
	\begin{equation*}
	\F^{-1}_{\xi\to x}\left(X_1(\phi(s)\xi) |\xi|^{-\sigma} \partial_t W_1(s,t,|\xi|) \widehat{\psi}\right).
	\end{equation*}
	Exploiting this time the change of variables $\eta := \phi(s)\xi$ and $x:=\phi(s)y$, we get
	\begin{multline*}
	\n{F^{-1}_{\xi\to x} \left( X_1(\phi(s)\xi) |\xi|^{-\sigma} \partial_t W_1(s,t,|\xi|) \widehat{\psi} \right)}_{L^{q'}}
	\\
	\lesssim
	\tau^{m/2} s^{(n/q'-n+\sigma)(m+1)}
	\n{T_1 * \F^{-1}_{\eta\to y}\left(\widehat{\psi}(\eta/\phi(s))\right)}_{L^{q'}}
	\end{multline*}
	where
	\begin{align*}
	T_1 :=&\, \F^{-1}_{\eta\to y} \left(
	X_1(\eta) |\eta|^{1-\sigma} e^{-i[1+\tau^{m+1}]|\eta|}
	{\Phi}_1(\mu;\tau;|\eta|)
	\right)
	\\
	{\Phi}_1(\mu;\tau;|\eta|)
	:=&\,
	\tau^{m/2}
	[\Phi(\mu+1,2\mu+1;2i\tau^{m+1}|\eta|)-\Phi(\mu,2\mu;2i\tau^{m+1}|\eta|)]
	\\
	&\times \Phi(1-\mu,2(1-\mu);2i|\eta|).
	\end{align*}
	Taking in account \eqref{Phi0} and \eqref{estPhi}, we infer that
	\begin{equation*}
	|\Phi_1(\mu;\tau;|\eta|)| \lesssim
	\tau^{m/2} (\tau^{m+1}|\eta|)^{-\mu} =
	|\eta|^{-\mu}
	\quad
	\text{ on $\supp X_1(\eta) \subseteq \left[(2\tau^{m+1})^{-1},1 \right]$, }
	\end{equation*}
	and thus, for any $\l>0$, we obtain
	\begin{align*}
	\meas\{ \eta\in\R^n \colon |\F_{y \to \eta}(T_1)| \ge \l\}
	&\le
	\meas\{ \eta\in\R^n \colon \text{$|\eta| \le 1$ and $|\eta|^{1-\mu-\sigma} \gtrsim \l$} \}
	\\
	&\lesssim
	\begin{cases}
	1 & \text{if $0 < \l \le 1$,} \\
	0 & \text{if $\l\ge 1$ and $\sigma \le 1-\mu$,} \\
	\l^{-\frac{n}{\sigma-1+\mu}} & \text{if $\l\ge 1$ and $\sigma > 1-\mu$,}
	\end{cases}
	\\
	&\lesssim \l^{-b},
	\end{align*}
	where $1<b<\infty$ if $\sigma \le 1-\mu$ and $1<b \le \frac{n}{\sigma-1+\mu}$ if $\sigma>1-\mu$.
	Hence by Lemma \ref{lem:hormander}, we get $T \in L^{q'}_q$ for $1<q\le 2 \le q'<\infty$ and $\sigma \le 1-\mu + n (\frac{1}{q}-\frac{1}{q'})$.
	Then we reach
	\begin{equation}\label{Zinter}
	\n{F^{-1}_{\xi\to x} \left( X_1(\phi(s)\xi) |\xi|^{-\sigma} \partial_t W_1(s,t,|\xi|) \widehat{\psi} \right)}_{L^{q'}}
	\lesssim
	\tau^{m/2} s^{\left[\sigma-n \left( \frac{1}{q} - \frac{1}{q'} \right) \right](m+1)}
	\n{\psi}_{L^q}.
	\end{equation}

	\textbf{Estimates at high frequencies for $\partial_t W_1(s,t,D_x)$.}
	Finally, we want to estimate the Fourier multiplier
	\begin{equation*}
	\F^{-1}_{\xi\to x}\left(X_2(\phi(s)\xi) |\xi|^{-\sigma} \partial_t W_1(s,t,|\xi|) \widehat{\psi}\right).
	\end{equation*}
	We choose a set of functions $\{ \chi_k \}_{k\ge0}$ as in the statement of Lemma \ref{lem:brenner}.
	
	\textit{$L^1-L^\infty$ estimates.}
	We claim that, for $k\ge0$,
	\begin{equation}\label{est1oo}
	\n{\F^{-1}_{\xi\to x}\left( X_2(\phi(s)\xi) \, \chi_k(\phi(s) \xi) \, |\xi|^{-\sigma} \partial_t W_1(s,t,|\xi|) \right)}_{L^\infty}
	\lesssim
	2^{k\left( n-\sigma \right)} \tau^{m/2} s^{(\sigma-n)(m+1)}.
	\end{equation}
	Exploiting the change of variables $\phi(s)\xi=2^k \eta$ and $2^k x=\phi(s)y$, by the expression of the symbol $\partial_t W_1(s,t,|\xi|)$, we obtain
	\begin{multline}\label{1oo}
	\n{\F^{-1}_{\xi\to x}\left( X_2(\phi(s)\xi) \, \chi_k(\phi(s) \xi) \, |\xi|^{-\sigma} \partial_t W_1(s,t,|\xi|) \right)}_{L^\infty}
	\\\lesssim
	2^{k(n-\sigma+1)} \tau^m s^{(\sigma-n)(m+1)}
	[A^+_+ + A^+_- + A^-_+ + A^-_-]
	\end{multline}
	where
	\begin{align*}
	A^+_\pm :=&\, \n{\F^{-1}_{\eta\to y} \left( e^{i[ \pm 1 + \tau^{m+1} ]2^k|\eta|} v^{+,\pm}_k(\eta) \right) }_{L^\infty},
	\\
	A^-_\pm :=&\, \n{\F^{-1}_{\eta\to y} \left( e^{i[ \pm 1 - \tau^{m+1}]2^k|\eta|} v^{-,\pm}_k(\eta) \right) }_{L^\infty},
	\end{align*}
	and
	\begin{align*}
	v^{+,\pm}_k(\eta) :=&\,
	X_2(2^k \eta)\chi(\eta) |\eta|^{1-\sigma}
	{H}^0_+(2i\tau^{m+1}2^{k}|\eta|) {H}^1_\pm(2i2^{k}|\eta|),
	\\
	v^{-,\pm}_k(\eta) :=&\,
	X_2(2^k \eta)\chi(\eta) |\eta|^{1-\sigma}
	{H}^0_-(2i\tau^{m+1}2^{k}|\eta|) {H}^1_\pm(2i2^{k}|\eta|).
	\end{align*}
	The functions $v^{\pm,\pm}_k(\eta)$ are smooth and compactly supported on $\{\eta \in \R^n \colon 1/2 \le |\eta| \le 2\}$.
	When $k=0$, it is easy to see by estimates \eqref{estH+}-\eqref{estH-} that
	\begin{equation*}
	\n{\F^{-1}_{\eta\to y} \left( e^{i[\pm 1+\tau^{m+1}]|\eta|} v^{+,\pm}_0(\eta) \right) }_{L^\infty}
	\le
	\n{v^{+,\pm}_0}_{L^1}
	\lesssim \tau^{-(m+1)\mu} \n{X_2(\eta)\chi(\eta)|\eta|^{-\sigma}}_{L^1}
	\lesssim \tau^{- m/2}.
	\end{equation*}
	For $k\ge1$, by Lemma \ref{lem:littman} we have, for some integer $N>0$, that
	\begin{equation}\label{1-oomult}
	\n{\F^{-1}_{\eta\to y} \left( e^{i[\pm 1+\tau^{m+1}]2^k|\eta|} v^{+,\pm}_k(\eta) \right) }_{L^\infty}
	\lesssim
	(1+[\pm1 + \tau^{m+1}]2^k)^{-\frac{n-1}{2}} \sum_{|\alpha|\le N} \n{\partial_\eta^\alpha v^{+,\pm}_k}_{L^1}
	.
	\end{equation}
	Since $X_2(2^k\eta)\chi(\eta)=\chi(\eta)$ for $k\ge1$, by estimates \eqref{estH+}-\eqref{estH-} and Leibniz rule we infer
	\begin{align*}
	|\partial^\alpha_\eta v^{+,\pm}_k(\eta)|
	&=
	\left\lvert
	\sum_{\gamma\le\beta\le\alpha}
	\binom{\alpha}{\beta}\binom{\beta}{\gamma}
	\partial^{\alpha-\beta}_\eta
	\left(\chi(\eta)|\eta|^{1-\sigma} \right)
	\partial^{\beta-\gamma}_\eta {H}^0_+(2i\tau^{m+1}2^{k}|\eta|)
	\partial_\eta^\gamma
	{H}^1_\pm(2i2^{k}|\eta|)
	\right\rvert
	\\
	&\lesssim
	\tau^{-m/2}
	2^{-k}
	\sum_{\beta\le\alpha}
	C_{\mu,\alpha,\beta}
	\mathbf{1}_{\left[{1}/{2},2\right]}(\eta)
	|\eta|^{-1-|\beta|}
	\end{align*}
	where $\mathbf{1}_{\left[{1}/{2},2\right]}(\eta)=1$ for $1/2\le |\eta| \le 2$ and $\mathbf{1}_{\left[{1}/{2},2\right]}(\eta)=0$ otherwise.
	From the latter estimate and \eqref{1-oomult}, we get
	\begin{equation*}
	A^+_\pm
	\lesssim
	\tau^{-m/2} 2^{-k} (1+[\pm1+\tau^{m+1}])^{-\frac{n-1}{2}}
	\le
	\tau^{-m/2} 2^{-k}.
	\end{equation*}
	Similarly we obtain also that $A^-_\pm \lesssim \tau^{-m/2} 2^{-k}$. Thus, inserting in  \eqref{1oo} we obtain \eqref{est1oo}, which combined with the Young inequality give us the $L^1-L^\infty$ estimate
	\begin{multline}\label{Zh1oo}
	\n{\F^{-1}_{\xi\to x}\left( X_2(\phi(s)\xi) \, \chi_k(\phi(s) \xi) \, |\xi|^{-\sigma} \partial_t W_1(s,t,|\xi|) \widehat{\psi}\right)}_{L^\infty}
	\\
	\lesssim
	2^{k\left( n-\sigma \right)} \tau^{m/2} s^{(\sigma-n)(m+1)} \n{\psi}_{L^1}.
	\end{multline}
	
	\textit{$L^2 - L^2$ estimates.}
	By the Plancherel formula, H\"older inequality, estimate \eqref{estPhi} and the substitution $\phi(s)\xi=2^k\eta$, we obtain
	\begin{multline}\label{Zh22}
	\n{\F^{-1}_{\xi\to x}\left( X_2(\phi(s)\xi) \, \chi_k(\phi(s) \xi) \, |\xi|^{-\sigma} \partial_t W_1(s,t,|\xi|) \widehat{\psi}\right)}_{L^2}
	\\
	\begin{split}
	&\le
	\n{ X_2(\phi(s)\xi) \, \chi_k(\phi(s) \xi) \, |\xi|^{-\sigma} \partial_t W_1(s,t,|\xi|) }_{L^\infty} \n{\psi}_{L^2}
	\\
	&\lesssim
	2^{-k\sigma} \tau^{m/2} s^{\sigma(m+1)} \n{\psi}_{L^2}.
	\end{split}
	\end{multline}
	
	\textit{$L^q-L^{q'}$ estimates.} The interpolation between \eqref{Zh1oo} and \eqref{Zh22} give us the estimates on the conjugate line
	\begin{multline}\label{Zh}
	\n{\F^{-1}_{\xi\to x}\left( X_2(\phi(s)\xi) \, \chi_k(\phi(s) \xi) \, |\xi|^{-\sigma} \partial_t W_1(s,t,|\xi|) \widehat{\psi}\right)}_{L^{q'}}
	\\
	\lesssim
	2^{k\left[ n \left(\frac{1}{q}-\frac{1}{q'} \right) -\sigma \right]}
	\tau^{m/2}
	s^{\left[ \sigma - n \left(\frac{1}{q}-\frac{1}{q'}\right) \right](m+1)}
	\n{\psi}_{L^q},
	\end{multline}
	where $1<q\le2$. Now, choosing $n \left(\frac{1}{q}-\frac{1}{q'} \right) \le \sigma$, putting together \eqref{Zpd}, \eqref{Zinter} and \eqref{Zh} with an application of Lemma \ref{lem:brenner}, we finally obtain the $L^q-L^{q'}$ estimate for $\partial_t W_1(s,t,D_x)$.

	%
	\medskip
	\textbf{Estimates for $\partial_t W_2(s,t,D_x)$.}
	For the intermediate and high frequencies cases, proceeding as above we straightforwardly obtain, under the constrains
	$\sigma \le \mu + n (\frac{1}{q}-\frac{1}{q'})$ and $n \left(\frac{1}{q}-\frac{1}{q'} \right) \le \sigma$ respectively, that
	\begin{equation}\label{W2chi_j}
	\n{F^{-1}_{\xi\to x} \left( X_j(\phi(s)\xi) |\xi|^{-\sigma} \partial_t W_2(s,t,|\xi|) \widehat{\psi} \right)}_{L^{q'}}
	\lesssim
	\tau^{m/2} s^{\left[\sigma-n \left( \frac{1}{q} - \frac{1}{q'} \right) \right](m+1)}
	\n{\psi}_{L^q}.
	\end{equation}
	for $j\in\{1,2\}$ and $1<q\le 2 \le q'<\infty$.
	
	At low frequencies, by computations similar to that for $\partial_tW_1(s,t,D_x)$, we obtain
	\begin{multline*}
	\n{F^{-1}_{\xi\to x} \left( X_0(\phi(t)\xi) |\xi|^{-\sigma} \partial_t W_2(s,t,|\xi|) \widehat{\psi} \right)}_{L^{q'}}
	\\
	\lesssim
	t^{(n/q'-n+\sigma)(m+1)}
	\n{T_0 * \F^{-1}_{\eta\to y}\left(\widehat{\psi}(\eta/\phi(t))\right)}_{L^{q'}}
	\end{multline*}
	where this time
	\begin{align*}
	T_0 :=&\, \F^{-1}_{\eta\to y} \left(
	X_0(\eta) |\eta|^{-\sigma} e^{-i[1+1/\tau^{m+1}]|\eta|}
	{\Phi}_0(\mu;\tau;|\eta|)
	\right)
	\\
	{\Phi}_0(\mu;\tau;|\eta|)
	:=&\,
	\Phi(\mu,2\mu;2i|\eta|/\tau^{m+1})
	\\
	&\times
	[\Phi(1-\mu,1-2\mu;2i|\eta|)-i(m+1)|\eta|\Phi(1-\mu,2(1-\mu);2i|\eta|)]
	\\
	=&\,
	[1+\tau^{-(m+1)}O(|\eta|)][1+O(|\eta|)],
	\end{align*}
	and hence again $|{\Phi}_0(\mu;\tau;|\eta|)|\lesssim 1$ if $|\eta|\le 3/4$. For any $\l>0$, we get
	\begin{align*}
	\meas\{ \eta\in\R^n \colon |\F_{y \to \eta}(T_0)| \ge \l\}
	\le
	\meas\{ \eta\in\R^n \colon \text{$|\eta| \le 3/4$ and $|\eta|^{-\sigma} \gtrsim \l$} \}
	\lesssim \l^{-b},
	\end{align*}
	where $1<b<\infty$ if $\sigma \le 0$ and $1 <b \le \frac{n}{\sigma}$ if $\sigma>0$.
	Another application of Lemma \ref{lem:hormander} tell us that $T_0 \in L^{q'}_q$ for $1<q\le 2 \le q'<\infty$ with the condition on $\sigma$ given by $\sigma \le n (\frac{1}{q}-\frac{1}{q'})$. Finally, similarly as in the case of $\partial_t W_1(s,t,D_x)$ we conclude that \eqref{W2chi_j} holds true also for $j=0$.
	
	The proof of estimates (iv) in Theorem\til\ref{thm:estW} is thus reached combining \eqref{W2chi_j} for $j\in\{0,1,2\}$, and putting together all the constrains on the range of $\sigma$, we are forced to choose $\sigma = n (\frac{1}{q}-\frac{1}{q'})$.

\section*{Acknowledgement}

The first author is supported by NSF of Zhejiang Province (LY18A010008) and NSFC (11771194). The second author is member of the Gruppo Nazionale per L'Analisi Matematica, la
Probabilit\`a e le loro Applicazioni (GNAMPA) of the Istituto Nazionale di
Alta Matematica (INdAM) and partially supported by the GNAMPA project \lq\lq Equazioni di tipo dispersivo: teoria e metodi'' and by \lq\lq Progetti per Avvio alla Ricerca di Tipo 1 -- Sapienza Universit\`a di Roma''.


\bibliographystyle{plain}

\end{document}